\def\NZQ{\mathbb}               % the font for N,Z,Q,R,C
\def\ZZ{{\NZQ Z}}
\def\RR{{\NZQ R}}
\def\PP{{\NZQ P}}
\def\ab{{\mathbf a}}
\def\eb{{\mathbf e}}
\def\vb{{\mathbf v}}
\def\ub{{\mathbf u}}
\def\opn#1#2{\def#1{\operatorname{#2}}} % to make operators
\opn\gr{gr}
\def\Oc{{\mathcal O}}
\def\Pc{{\mathcal P}}
\def\Qc{{\mathcal Q}}
\def\Vol{{\textnormal{Vol}}}
\newtheorem{Theorem}{Theorem}[section]
\newtheorem{Lemma}[Theorem]{Lemma}
\newtheorem{Corollary}[Theorem]{Corollary}
\newtheorem{Proposition}[Theorem]{Proposition}
\theoremstyle{definition}
\newtheorem{Remark}[Theorem]{Remark}
\newtheorem{Example}[Theorem]{Example}
\newtheorem{Conjecture}[Theorem]{Conjecture}
\let\epsilon\varepsilon
\let\phi=\varphi
\let\kappa=\varkappa
\opn\dis{dis}
\opn\height{height}
\opn\dist{dist}
\def\pnt{{\raise0.5mm\hbox{\large\bf.}}}
\opn\Lex{Lex}
\opn\conv{conv}
\opn\codeg{codeg}
\opn\codim{codim}
\opn\int{int}
\begin{document}

\title{Castelnuovo polytopes}
\author{Akiyoshi Tsuchiya}

\address{Akiyoshi Tsuchiya,
Graduate School of Mathematical Sciences,
University of Tokyo,
Komaba, Meguro-ku, Tokyo 153-8914, Japan} 
\email{akiyoshi@ms.u-tokyo.ac.jp}

\subjclass[2010]{14M25, 52B20}
\keywords{Castelnuovo polytope, lattice polytope, Castelnuovo variety, polarized toric variety, sectional genus, $h^*$-vector, spanning polytope, very ample polytope, integer decomposition property}

\begin{abstract}
It is known that the sectional genus of a polarized variety has an upper bound, which is an extension of the Castelnuovo bound on the genus of a projective curve.
Polarized varieties whose sectional genus achieves this bound are called Castelnuovo.
On the other hand, a lattice polytope is called Castelnuovo if the associated polarized toric variety is Castelnuovo. Kawaguchi characterized Castelnuovo polytopes having interior lattice points in terms of their $h^*$-vectors. In this paper, as a generalization of this result, a characterization of all Castelnuovo polytopes will be presented.
Finally, as an application of our characterization, we give a sufficient criterion for a lattice polytope to be IDP.
\end{abstract}

\maketitle

\section{Introduction}
For an $n$-dimensional complex projective variety $X$ and an ample line bundle $L$ on $X$, the pair $(X,L)$ is called an $n$-dimensional \textit{polarized variety}. 
Let $\chi(tL)$ be the \textit{Euler-Poincar\'{e} characteristic} of $tL$. Then $\chi(tL)$ is a polynomial in $t$ of degree $n$.
We put
\[
\chi(tL)=\sum_{j=0}^n \chi_j(X,L) \dfrac{t^{[j]}}{j!},
\]
where $t^{[j]}=t(t+1)\cdots (t+j-1)$ for $j \geq 1$ and $t^{[0]}=1$.
Then the \textit{sectional genus} of $(X,L)$, denoted by $g(X,L)$, is defined by 
\[
g(X,L)=1-\chi_{n-1}(X,L).
\] 
The sectional genus $g(X,L)$ plays an important role in the classification theory of polarized varieties (cf. \cite{Fujita}).
Fujita gave an upper bound for the sectional genus as a higher-dimensional version of the Castelnuovo bound.
\begin{Theorem}[{\cite[Theorem 16.3]{Fujita}}]
\label{thm:castel}
	Let $X$ be an $n$-dimensional complex projective variety and let $L$ be a line bundle on $X$. Assume that $h^0(L) \geq n+2$, $L$ is basepoint free, and the morphism defined by $L$ is birational on its image. Then one has
	\[
	g(X,L) \leq m \Delta(X,L) - \dfrac{1}{2}m(m-1)(L^n-\Delta(X,L)-1),
	\]
	where $m=\lfloor(L^n-1)/(L^n-\Delta(X,L)-1) \rfloor$ and $\Delta(X,L)=L^n+n-h^0(L)$, which is called the {\rm $\Delta$-genus} of $(X,L)$.
\end{Theorem}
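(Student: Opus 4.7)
The plan is to reduce the theorem to the classical Castelnuovo--Halphen bound on the genus of a nondegenerate irreducible curve in projective space, via an inductive scheme of successive general hyperplane sections. The classical bound states that a nondegenerate irreducible curve $C \subset \PP^r$ of degree $d \geq r+1$ satisfies $g(C) \leq \pi(d, r)$, where $\pi(d, r) = m(d-r) - \binom{m}{2}(r-1)$ and $m = \lfloor (d-1)/(r-1) \rfloor$. The numerical invariants that the induction is designed to deliver on the curve section of $(X,L)$ are degree $d = L^n$ and ambient dimension $r = h^0(L) - n = L^n - \Delta(X,L)$, and substituting these into $\pi(d,r)$ produces exactly the bound claimed.

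First I would use that $L$ is basepoint free to view $\phi_L \colon X \to \PP^{h^0(L)-1}$ as a morphism, birational onto a nondegenerate image $X' \subset \PP^{h^0(L)-1}$ of degree $L^n$. Taking a general hyperplane $H \subset \PP^{h^0(L)-1}$, I set $Y \in |L|$ to be the preimage of $H \cap X'$. Bertini's theorem, together with the hypothesis $h^0(L) \geq n+2$, guarantees that $Y$ is irreducible and that $(Y, L|_Y)$ again satisfies all hypotheses of the theorem. The engine of the induction is the chain of invariance identities
\[
(L|_Y)^{n-1} = L^n, \qquad \Delta(Y, L|_Y) = \Delta(X, L), \qquad g(Y, L|_Y) = g(X, L),
\]
all of which follow by writing $\chi(tL|_Y) = \chi(tL) - \chi((t-1)L)$ and comparing the coefficients of $t^{[n-1]}/(n-1)!$ and $t^{[n-2]}/(n-2)!$ in Fujita's normal form of the Hilbert polynomial. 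Iterating $n-1$ times reduces $(X,L)$ to a polarized irreducible curve $(C, L|_C)$ of degree $L^n$ embedded in $\PP^{L^n - \Delta(X,L)}$ with geometric genus equal to $g(X,L)$, and the classical Castelnuovo bound then closes the argument.

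The main obstacle is ensuring that the inductive hypothesis genuinely survives each hyperplane section: one must verify that the restricted morphism $\phi_{L|_Y}$ remains birational onto its image, equivalently that a general hyperplane section of the birational image $X'$ stays irreducible and reduced and that the induced map $Y \to H \cap X'$ is generically one-to-one. This is handled by a Bertini-type argument for birational morphisms, exploiting that the image dimension equals the source dimension, together with a codimension count for the loci where birationality can fail. Beyond this delicate point the argument is essentially bookkeeping: matching Fujita's normalization of the Hilbert polynomial in terms of the falling factorials $t^{[j]}$ with the standard Castelnuovo invariants on the curve side, and verifying the arithmetic identity $\pi(L^n, L^n - \Delta(X,L)) = m\Delta(X,L) - \tfrac{1}{2}m(m-1)(L^n - \Delta(X,L) - 1)$ for the stated value of $m$.
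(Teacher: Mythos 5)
The paper does not prove this statement: it is quoted directly from Fujita \cite[Theorem 16.3]{Fujita} and used as a black box, so there is no in-paper proof to compare your attempt against. That said, your strategy --- iterated general members of $|L|$ reducing to the classical Castelnuovo bound for curves, with the identity $\chi(tL|_Y)=\chi(tL)-\chi((t-1)L)$ and $t^{[j]}-(t-1)^{[j]}=j\,t^{[j-1]}$ propagating $L^n$, $\Delta$ and $g$ down the ladder --- is exactly the standard route and is essentially the one Fujita follows; your arithmetic check that $\pi(L^n,\,L^n-\Delta(X,L))$ equals the stated bound is also correct. Two points in your sketch are looser than they should be. First, what arrives at the curve stage is the \emph{arithmetic} genus $1-\chi(\Oc_C)$ of a possibly singular integral curve $C$, not its geometric genus; you therefore need the version of Castelnuovo's bound for the arithmetic genus of an integral nondegenerate curve (obtained from $p_a(C)\le p_a(C')$ for the finite birational image $C'\subset\PP^{r}$ together with the general/uniform position lemma for its hyperplane sections), and your phrase ``geometric genus equal to $g(X,L)$'' should be corrected accordingly. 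Second, the restriction map $H^0(X,L)\to H^0(Y,L|_Y)$ is only guaranteed to have one-dimensional kernel, so $h^0$ may drop by less than $1$ at each step and $\Delta$ may strictly decrease along the ladder rather than be preserved; this is harmless because $\pi(d,r)$ is non-increasing in $r$, but it must be said, since otherwise the claimed invariance $\Delta(Y,L|_Y)=\Delta(X,L)$ is not justified. With those repairs (and the Bertini-type argument you already flag for keeping the general member integral and the restricted map birational), the outline is a correct proof scheme for Fujita's theorem.
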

We remark that the birationality of the morphism defined by $L$ in Theorem \ref{thm:castel} is essential (see \cite[Remark 16.5]{Fujita}).
A polarized variety $(X,L)$ is called \textit{Castelnuovo} if it satisfies the assumption of Theorem $\ref{thm:castel}$ and the sectional genus achieves the upper bound in Theorem \ref{thm:castel}.
Even if $(X,L)=(\PP^n,\Oc(1))$, which does not satisfy the condition $h^0(L) \geq n+2$, we say that $(X,L)$ is Castelnuovo. 
Castelnuovo varieties contain many important examples such as the higher-dimensional version of del Pezzo surfaces and $K3$ surfaces. 
We refer the reader to \cite{Fujita} for the detailed information about the sectional genus and Castelnuovo varieties.
In the present paper, we focus on Castelnuovo toric varieties.

A \textit{lattice polytope} is a convex polytope all of whose vertices have integer coordinates. A full-dimensional lattice polytope defines a polarized toric variety and there is a one-to-one correspondence between full-dimensional lattice polytopes and polarized toric varieties.
In particular, the ample line bundle associated to a full-dimensional lattice polytope is basepoint free.
We can read off properties of polarized toric varieties from the associated lattice polytopes and vice versa.
A lattice polytope is called \textit{Castelnuovo} if the associated polarized toric variety is Castelnuovo.
In \cite{Kawaguchi}, Kawaguchi characterized Castelnuovo polytopes having interior lattice points in terms of an invariant of lattice polytopes.
Let $\Pc \subset \RR^n$ be a full-dimensional lattice polytope
and define $h^*(\Pc,t)$ by the formula
\[
h^*(\Pc,t)=(1-t)^{n+1} \left[1+\sum_{k=1}^{\infty}|k\Pc \cap \ZZ^n|t^k\right],
\]
where $k\Pc=\{k \ab : \ab \in \Pc\}$, the dilated polytopes of $\Pc$.
Then it is known that $h^*(\Pc,t)$ is a polynomial in $t$ of degree at most $n$ with nonnegative integer coefficients (\cite{Stanleynonnegative}) and it
is called
the \textit{$h^*$-polynomial} of $\Pc$. 
Letting $h^*(\Pc,t)=\sum_{i=0}^{n} h_i^* t^i$, the sequence $h^*(\Pc)=(h^*_0,\ldots,h^*_n)$ is called the \textit{$h^*$-vector} of $\Pc$. It then follows that $h^*_0=1$, $h^*_1=|\Pc \cap \ZZ^n|-(n +1)$ and $h^*_{n}=|{\rm int} (\Pc) \cap \ZZ^n|$, where ${\rm int} (\Pc)$ is the  interior of $\Pc$.
Furthermore, $\sum_{i=0}^{n} h_i^*$ is equal to the \textit{normalized volume} of $\Pc$, denoted by $\Vol(\Pc)$.
We refer the reader to \cite{BeckRobins} for the detailed information about  $h^*$-polynomials and $h^*$-vectors.

For a lattice polytope having interior lattice points, a lower bound on its $h^*$-vector  is known.
\begin{Theorem}[{\cite[Hibi's Lower Bound Theorem]{Hibi}}]
\label{thm:hibi}
	Let $\Pc \subset \RR^n$ be a full-dimensional lattice polytope with ${\rm int}(\Pc) \cap \ZZ^n \neq \emptyset$.
	Then we have  $h^*_1 \leq h^*_j$ for any $2 \leq j \leq n-1$.
\end{Theorem}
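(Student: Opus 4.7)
The plan is to combine an interior-lattice-point translation injection with Ehrhart--Macdonald reciprocity to obtain a family of generating-function inequalities on the $h^*$-vector, and then to extract from these the pointwise comparisons $h^*_1 \leq h^*_j$.

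Fix $\vb \in {\rm int}(\Pc) \cap \ZZ^n$. For every $k \geq 1$, the map $\ub \mapsto \ub + \vb$ injects $(k-1)\Pc \cap \ZZ^n$ into ${\rm int}(k\Pc) \cap \ZZ^n$: since $\vb + \varepsilon B \subseteq \Pc$ for a small open ball $B$, one has $\ub + \vb + \varepsilon B \subseteq (k-1)\Pc + \Pc = k\Pc$, so $\ub + \vb$ lies in the interior of $k\Pc$. Hence $|{\rm int}(k\Pc) \cap \ZZ^n| \geq |(k-1)\Pc \cap \ZZ^n|$ for every $k \geq 1$. Expressing both sides via the $h^*$-vector through $L_\Pc(k) = \sum_i h^*_i \binom{k+n-i}{n}$ and, by Ehrhart--Macdonald reciprocity, $L_{{\rm int}(\Pc)}(k) = \sum_i h^*_i \binom{k+i-1}{n}$, the injection becomes the family of weighted inequalities
\[\sum_i h^*_i \left[\binom{k+i-1}{n} - \binom{k+n-i-1}{n}\right] \geq 0, \qquad k \geq 1.\]

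The main obstacle is converting these weighted sums into the required pointwise comparisons. At $k=2$ only $h^*_0,h^*_1,h^*_{n-1},h^*_n$ contribute, yielding $h^*_{n-1} + (n+1)(h^*_n - 1) \geq h^*_1$, which already gives the case $j = n-1$ whenever $h^*_n = 1$. However this channel alone cannot produce the full conclusion: when $n$ is even, the coefficient of the middle term $h^*_{n/2}$ is identically zero in every such inequality, so no information about $h^*_{n/2}$ arises from a single-translation injection. I anticipate completing the proof requires an extra ingredient, such as a refinement of the injection using all $h^*_n$ interior lattice points simultaneously to enlarge the image in ${\rm int}(k\Pc)$; Stanley's monotonicity theorem applied to a suitably chosen subpolytope $\Qc\subseteq \Pc$ of the same dimension with exactly one interior lattice point; or a shelling argument for a regular lattice triangulation of $\Pc$ combined with the nonnegativity of $h^*$-coefficients \cite{Stanleynonnegative}. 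The bulk of the work will lie in handling the intermediate coefficients---especially $h^*_{n/2}$ in the even-dimensional case---and in absorbing the slack produced when $h^*_n$ is large.
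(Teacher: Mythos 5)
Your proposal is not a proof but a plan with an explicitly acknowledged hole, and the hole is exactly where the theorem lives. The translation injection $(k-1)\Pc\cap\ZZ^n \hookrightarrow \mathrm{int}(k\Pc)\cap\ZZ^n$ and the resulting inequalities $\sum_i h^*_i\bigl[\binom{k+i-1}{n}-\binom{k+n-i-1}{n}\bigr]\ge 0$ are correct, but as you observe the bracket is antisymmetric under $i\mapsto n-i$, so the whole family of inequalities only compares $h^*_i$ with $h^*_{n-i}$ in various weighted combinations; it says nothing at all about $h^*_{n/2}$ for even $n$, and even for the other indices it does not isolate the pointwise comparison $h^*_1\le h^*_j$ from the weighted sums (the slack coming from $(n+1)(h^*_n-1)$ already swamps the $k=2$ inequality whenever $h^*_n\ge 2$). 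None of the three candidate repairs is carried out, and the second one cannot work as stated: Stanley monotonicity for $\Qc\subseteq\Pc$ gives $h^*_j(\Qc)\le h^*_j(\Pc)$, so even granting the theorem for $\Qc$ you would only conclude $h^*_1(\Qc)\le h^*_j(\Pc)$, whereas you need the lower bound $h^*_1(\Pc)$, which may be much larger than $h^*_1(\Qc)$.

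The actual argument (Hibi's, which this paper cites and whose triangulation it reuses in the proof of Lemma \ref{lem:unimodular}) is of the type of your third suggestion, but the work you defer is the entire content: one triangulates $\partial\Pc$ using all boundary lattice points, cones over the interior lattice points $\vb_1,\dots,\vb_l$ one at a time to get a triangulation $\Delta$ of $\Pc$ with vertex set $\Pc\cap\ZZ^n$, tracks how each coning step changes the $h$-vector to establish combinatorially that $h_1(\Delta)\le h_j(\Delta)$ for $2\le j\le n-1$ with $h_1(\Delta)=|\Pc\cap\ZZ^n|-(n+1)=h^*_1$, and then invokes the Betke--McMullen/Stanley comparison $h_j(\Delta)\le h^*_j$. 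That middle step, controlling the $h$-vector of the constructed triangulation, is precisely what handles the troublesome middle coefficients, and your proposal contains no mechanism for it. As written, the proposal establishes only consequences of Ehrhart reciprocity that are strictly weaker than the theorem.
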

We can compute the sectional genus of a polarized toric variety by the $h^*$-vector of the associated lattice polytope (see Section 2).
In \cite{Kawaguchi}, Kawaguchi proved that a lattice polytope having interior lattice points is Castelnuovo if and only if its $h^*$-vector achieves the lower bound in Theorem \ref{thm:hibi}. 
\begin{Theorem}[{\cite[Theorem 1.3]{Kawaguchi}}]
	\label{thm:Kawaguchi}
	Let $\Pc \subset \RR^n$ be a full-dimensional lattice polytope with ${\rm int}(\Pc) \cap \ZZ^n \neq \emptyset$.
	Then $\Pc$ is Castelnuovo if and only if  $h^*_1 = h^*_j$ for any $2 \leq j \leq n-1$. 
\end{Theorem}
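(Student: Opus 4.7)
The plan is to translate the Castelnuovo bound into an explicit identity on the $h^*$-vector, and then extract the equalities $h^*_j = h^*_1$ by a sign analysis powered by Hibi's Lower Bound Theorem and the elementary inequality $h^*_n \leq h^*_1$.

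First I would set up the dictionary. From $L^n = \sum_i h^*_i$, $|\Pc \cap \ZZ^n| = h^*_1 + n + 1$, and a short expansion of the Ehrhart polynomial $L_\Pc(t) = \sum_i h^*_i \binom{t+n-i}{n}$ in the rising-factorial basis $\{\binom{t+j-1}{j}\}_j$, one obtains
\[
\Delta(X,L) = \sum_{i=2}^n h^*_i, \qquad L^n - \Delta - 1 = h^*_1, \qquad g(X,L) = \sum_{i=2}^n (i-1)\, h^*_i.
\]
In particular $m = 1 + \lfloor \Delta/h^*_1 \rfloor$, that is $(m-1) h^*_1 \leq \Delta < m h^*_1$, and the Castelnuovo equality $g = m\Delta - \binom{m}{2} h^*_1$ becomes the identity
\[
(\star) \qquad \sum_{i=2}^n (m+1-i)\, h^*_i = \binom{m}{2}\, h^*_1.
\]
Theorem \ref{thm:hibi} gives $h^*_i \geq h^*_1$ for $2 \leq i \leq n-1$, while the fact that every $n$-polytope has at least $n+1$ boundary lattice points (its vertices) rewrites as $h^*_n \leq h^*_1$. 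Writing $h^*_i = h^*_1 + r_i$ with $r_i \geq 0$ for $2 \leq i \leq n-1$ and $r_n \leq 0$, identity $(\star)$ rearranges to
\[
\sum_{i=2}^n (m+1-i)\, r_i = \tfrac12 h^*_1 (m-n)(m-n+1),
\]
while the bracket on $\Delta$ becomes $(m-n) h^*_1 \leq \sum_{i=2}^n r_i < (m-n+1) h^*_1$.

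For the only-if direction, a case analysis restricts $m$ to $\{n-1, n\}$. If $m = n + k$ with $k \geq 1$, then $m+1-i \geq k+2$ for $i \leq n-1$ and $m+1-n = k+1$, so after using $-r_n \geq 0$ and $\sum_i r_i \geq k h^*_1$ the left side is at least $(k+2)\sum_i r_i - r_n \geq k(k+2) h^*_1$, which strictly exceeds the right side $\tfrac12 k(k+1) h^*_1$. If $m \leq n-2$, then $\sum_i r_i < (m-n+1) h^*_1 \leq -h^*_1$ contradicts $\sum_i r_i \geq r_n \geq 1 - h^*_1$ (using $h^*_n \geq 1$). For $m = n-1$ the right side of the rearranged identity vanishes and every coefficient on $i \leq n-1$ is a positive integer, forcing $r_i = 0$ for $2 \leq i \leq n-1$. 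For $m = n$ the right side again vanishes, yielding $\sum_{i=2}^{n-1}(n+1-i) r_i = -r_n$; the coefficients $n+1-i \geq 2$ combined with $\sum_i r_i \geq 0$ then force $r_i = 0$ for $2 \leq i \leq n-1$ (and $r_n = 0$).

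Conversely, if $h^*_j = h^*_1$ for $2 \leq j \leq n-1$, then $m = n-1$ when $h^*_n < h^*_1$ and $m = n$ when $h^*_n = h^*_1$, and $(\star)$ is verified by direct substitution in either case. The main obstacle is the case analysis in the only-if step: the floor function hidden in $m$, Hibi's lower bound, and the inequality $h^*_n \leq h^*_1$ must all be combined so that $(\star)$ is rigid enough to both restrict $m$ to $\{n-1,n\}$ and then collapse each $r_i$ with $2 \leq i \leq n-1$ to zero.
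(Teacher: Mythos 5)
Your ``only if'' direction is sound: the dictionary $\Delta(X,L)=\sum_{i=2}^n h^*_i$, $L^n-\Delta-1=h^*_1$, $g=\sum_{i=2}^n(i-1)h^*_i$ is correct, and the sign analysis combining Hibi's bound $h^*_i\geq h^*_1$ ($2\leq i\leq n-1$), the vertex count $h^*_n\leq h^*_1$, and the bracket $(m-1)h^*_1\leq\Delta<mh^*_1$ does pin $m$ to $\{n-1,n\}$ and collapse the $r_i$ to zero; this is essentially the computation in the paper's Proposition \ref{prop:suf} specialized to $\deg(\Pc)=n$. The problem is the converse. Being Castelnuovo is not merely the numerical identity $(\star)$: by definition $(X,L)$ must also \emph{satisfy the hypotheses} of Theorem \ref{thm:castel}, in particular that the morphism defined by $L$ is birational on its image, which by Lemma \ref{lem:spanning} is equivalent to $\Pc$ being spanning. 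This is not automatic, and Example \ref{ex:nonspanning} shows that a polytope can have exactly the ``right'' $h^*$-vector and still fail to be Castelnuovo for this reason. Your ``direct substitution'' verifies only that the genus equals the bound value; it never checks birationality. This is precisely the gap in Kawaguchi's original argument that the present paper was written to close.

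The missing ingredient is Lemma \ref{lem:unimodular}: if $\mathrm{int}(\Pc)\cap\ZZ^n\neq\emptyset$ and $h^*_1=h^*_j$ for $2\leq j\leq n-1$, then the triangulation built in Hibi's proof of the lower bound theorem (triangulate $\partial\Pc$ and cone successively over the interior lattice points) has $h$-vector squeezed between $h^*_1$ and $h^*_j$ in each intermediate degree, hence equal to $h^*(\Pc)$, so by the Betke--McMullen criterion (Lemma \ref{lem:complex}) it is unimodular; consequently $\Pc$ is IDP and in particular spanning. Only after this step may one conclude that $(X,L)$ satisfies the assumptions of Theorem \ref{thm:castel} and that achieving the bound makes it Castelnuovo. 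Without an argument of this kind your ``if'' direction is incomplete, even though the statement itself is true.
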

This result is equivalent to a characterization of Castelnuovo toric varieties $(X,L)$ with $h^0(L+K_X) \geq 1$.
On the other hand, with respect to a polarized toric variety with $h^0(L+K_X) = 0$, a general criterion to determine whether or not it is Castelnuovo, namely, a characterization of Castelnuovo polytopes without interior lattice points is not known.

In the present paper, as a generalization of Theorem \ref{thm:Kawaguchi} we give a characterization of all Castelnuovo polytopes. Denote $\deg(\Pc)$ the degree of the $h^*$-polynomial of $\Pc$. It then follows from \cite{Baty} that 
\[\deg(\Pc)=n+1-\min\{k \in \ZZ_{\geq 1}  : {\rm int}(k\Pc) \cap \ZZ^n \neq \emptyset\}.\]
A lattice polytope $\Pc$ is called \textit{spanning} if every lattice point in $\ZZ^n$ is affine integer combination of the lattice points in $\Pc$. Note that $\Pc$ is spanning if and only if the morphism defined by the ample line bundle associated to $\Pc$ is birational on its image (\cite[Proposition 2.11]{HKN2}).
Hence Castelnuovo polytopes are spanning.
Recently in \cite{HKN2} a generalization of Hibi's lower bound theorem was proven. When we restrict spanning lattice polytopes, one has the following.

\begin{Theorem}[{\cite[Corollary 1.6]{HKN2}}]
\label{thm:general}
	Let $\Pc \subset \RR^n$ be a full-dimensional spanning lattice polytope. Then we have $h^*_1 \leq  h^*_j$ for any $2 \leq j \leq \deg(\Pc)-1$.
\end{Theorem}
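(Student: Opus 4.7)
The plan is to follow the commutative-algebra template underlying Hibi's proof of Theorem~\ref{thm:hibi}, while bringing in the spanning hypothesis to extend the argument over the range $2\le j\le\deg(\Pc)-1$. Over an infinite field $k$, I will work with the Ehrhart semigroup algebra $B:=k[x^{a}t^{k}:k\ge 0,\ a\in k\Pc\cap\ZZ^{n}]$, graded by the $t$-exponent. Hochster's theorem tells us that $B$ is a Cohen--Macaulay normal graded $k$-algebra of Krull dimension $n+1$ whose Hilbert series is $h^{*}(\Pc,t)/(1-t)^{n+1}$; its canonical module $\omega_{B}$ has a monomial basis indexed by interior lattice points of the dilations $k\Pc$, so it vanishes below degree $s_{0}:=n+1-\deg(\Pc)$ and contains a nonzero element $\eta$ of degree $s_{0}$ coming from any interior lattice point of $s_{0}\Pc$.

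Next, I would pick a generic linear system of parameters $\theta_{0},\ldots,\theta_{n}\in B_{1}$, which exists because the ample line bundle associated with $\Pc$ is basepoint free, and which is a regular sequence on both $B$ and $\omega_{B}$. The Artinian quotients $\overline{B}:=B/(\theta_{\bullet})$ and $\overline{\omega}:=\omega_{B}/(\theta_{\bullet})\omega_{B}$ then satisfy
\[
\dim_{k}\overline{B}_{j}=h^{*}_{j}\quad\text{and}\quad \dim_{k}\overline{\omega}_{i}=h^{*}_{n+1-i},
\]
the second identity coming from Stanley reciprocity. For any fixed $j$ in the range $2\le j\le\deg(\Pc)-1$, the inequality $h^{*}_{1}\le h^{*}_{j}$ will follow once I produce a nonzero $\zeta_{j}\in\overline{\omega}_{n-j}$ whose annihilator in $\overline{B}_{1}$ is trivial: multiplication by $\zeta_{j}$ will then embed $\overline{B}_{1}\hookrightarrow\overline{\omega}_{n+1-j}$, a space of dimension $h^{*}_{j}$. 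Since $n-j\ge s_{0}$ throughout this range, $\overline{\omega}_{n-j}$ is nonzero; the natural candidate is $\zeta_{j}:=\eta\cdot\xi_{j}$, where $\xi_{j}\in\overline{B}_{\deg(\Pc)-1-j}$ is a generic product of degree-one elements.

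The spanning hypothesis will enter precisely in guaranteeing that such a $\xi_{j}$ works. Spanning is equivalent to $\mathrm{Frac}(k[\Pc])=\mathrm{Frac}(B)$, which forces the associated primes of $\overline{B}$ to reflect the lattice-point structure of $\Pc$ itself rather than being inflated by higher-degree generators of $B$; this should prevent a generic degree-one element from lying in any associated prime supporting the image of $\eta$. The main obstacle, as I see it, is upgrading this observation into the statement that a sufficiently generic product $\xi_{j}$ of $\deg(\Pc)-1-j$ degree-one elements satisfies $\mathrm{Ann}_{\overline{B}_{1}}(\eta\,\xi_{j})=0$; I expect this to require an induction on $j$ together with a combinatorial analysis of how the face structure of $\Pc$ governs zero divisors in $\overline{B}$. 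Hibi's classical setting corresponds to $s_{0}=1$ and $\deg(\Pc)=n$, where $\eta\in\overline{\omega}_{1}$ alone suffices as $\zeta_{n-1}$ and an interior lattice point of $\Pc$ itself streamlines the induction; the spanning generalization has to make do with the broader but subtler algebraic leverage supplied by the spanning condition.
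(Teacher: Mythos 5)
This theorem is not proved in the paper at all: it is quoted verbatim from \cite{HKN2}, so the only meaningful comparison is with the proof in that reference. Your framework (the Ehrhart ring $B$, Hochster's theorem, the Danilov--Stanley description of $\omega_B$, the Artinian reduction by a linear system of parameters, and the reduction of $h^*_1\le h^*_j$ to injectivity of a multiplication map $\overline{B}_1\to\overline{\omega}_{n+1-j}$) is set up correctly, and the degree bookkeeping checks out. But there is a genuine gap exactly at the step you yourself flag as ``the main obstacle'': you never produce a nonzero $\zeta_j\in\overline{\omega}_{n-j}$ with $\mathrm{Ann}_{\overline{B}_1}(\zeta_j)=0$, and this is the entire content of the theorem. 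Two concrete problems. First, for $j<\deg(\Pc)-1$ you do not even establish $\eta\,\xi_j\neq 0$ in $\overline{\omega}$; already the weaker nonvanishing $\dim_k\overline{\omega}_{n-j}=h^*_{j+1}>0$ for spanning polytopes is a nontrivial theorem (the ``no gaps'' result of Hofscheier--Katth\"an--Nill), not something available for free. Second, the mechanism you propose for how spanningness enters --- generic degree-one elements avoiding ``associated primes of $\overline{B}$'' --- cannot work as stated: $\overline{B}$ is Artinian local, its only associated prime is the graded maximal ideal, and every element of $\overline{B}_1$ lies in it. Any Lefschetz-type avoidance argument would have to be run before killing the last parameter, and making that precise is essentially as hard as the theorem itself; the equivalence of spanningness with $\mathrm{Frac}(k[\Pc])=\mathrm{Frac}(B)$ is correct but by itself gives no control over zero divisors in the Artinian reduction.

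For contrast, the proof in \cite{HKN2} takes a different route that sidesteps the injectivity problem. One cuts the irreducible projective variety $\mathrm{Proj}(B)$ by only $n$ generic hyperplanes to obtain a configuration $\Gamma$ of $\Vol(\Pc)$ points whose $h$-vector equals $h^*(\Pc)$; the spanning hypothesis --- equivalently, birationality of the morphism defined by the line bundle, Lemma \ref{lem:spanning} of this paper --- is what allows one to prove a characteristic-free version of the uniform position principle for $\Gamma$. The classical Castelnuovo-type estimate $h_\Gamma(i+j)\ge\min\{\deg\Gamma,\ h_\Gamma(i)+h_\Gamma(j)-1\}$ for points in uniform position then yields $h^*_1\le h^*_j$ for $2\le j\le\deg(\Pc)-1$. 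If you wish to pursue your canonical-module strategy, you would need an independent proof of the required weak-Lefschetz-type property of $\omega_B$, which is not currently in the literature and which I do not see how to extract from the spanning hypothesis alone.
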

We will show that a lattice polytope is Castelnuovo if and only if it is spanning and its $h^*$-vector achieves the lower bound in Theorem \ref{thm:general} and an additional condition is satisfied. 
In fact, the following is the main theorem of the present paper.
\begin{Theorem}
\label{main}
	Let $\Pc \subset \RR^n$ be a full-dimensional lattice polytope.
	Then $\Pc$ is Castelnuovo if and only if $\Pc$ is spanning, $h^*_1 \geq h^*_{\deg(\Pc)}$ and $h^*_1 = h^*_j$ for any $2 \leq j \leq \deg(\Pc)-1$.
\end{Theorem}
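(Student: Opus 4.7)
The plan is to translate Fujita's inequality into a combinatorial inequality on the $h^*$-vector and use the generalized lower bound (Theorem~\ref{thm:general}) as a matching ``dual'' bound, so that equality in Fujita's bound exactly pins down the structural conditions on $h^*(\Pc)$.

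The first step is the dictionary. For a polarized toric variety $(X,L)$ arising from $\Pc$, $\chi(tL) = |t\Pc\cap\ZZ^n|$, and this Ehrhart polynomial has two natural expansions:
\[
\sum_{i=0}^n h^*_i\binom{t+n-i}{n} \;=\; \sum_{j=0}^n \chi_j(X,L)\frac{t^{[j]}}{j!}.
\]
Comparing the coefficients of $t^n$ and $t^{n-1}$, with $d := \deg(\Pc)$ and $a := h^*_1$, I obtain
\[
L^n = \sum_{i=0}^n h^*_i, \qquad \Delta(X,L) = \sum_{i=2}^d h^*_i, \qquad L^n - \Delta(X,L) - 1 = a,
\]
\[
g(X,L) = \sum_{i=2}^d (i-1)\, h^*_i.
\]
So Fujita's bound in Theorem~\ref{thm:castel} becomes $g(X,L) \le m\,\Delta(X,L) - \binom{m}{2}\, a$ with $m = 1 + \lfloor \Delta(X,L)/a \rfloor$. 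The degenerate cases $d \le 1$ (including the conventional $(\PP^n,\Oc(1))$) can be handled directly by inspection, and I concentrate on $d \ge 2$.

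For this main case, $\Pc$ spanning gives $h^*_i \ge a$ for $2 \le i \le d-1$ by Theorem~\ref{thm:general}. A simple rearrangement,
\[
(d-1)\,\Delta(X,L) - g(X,L) \;=\; \sum_{i=2}^{d-1}(d-i)\, h^*_i \;\ge\; a\sum_{i=2}^{d-1}(d-i) \;=\; \binom{d-1}{2}\, a,
\]
produces a second upper bound $g(X,L) \le (d-1)\,\Delta(X,L) - \binom{d-1}{2}\, a$, with equality precisely when $h^*_i = a$ for all $2 \le i \le d-1$. I then compare it to Fujita's bound by studying $f(k) := k\,\Delta(X,L) - \binom{k}{2}\, a$. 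The integer $m$ is exactly the argmax of $f$ by construction, and the inequality above forces $\Delta(X,L) \ge (d-2)\, a$ and hence $m \ge d-1$, so $f(d-1) \le f(m)$. Since $f(k+1) - f(k) = \Delta(X,L) - k\, a$, the equality $f(d-1) = f(m)$ holds iff $m = d-1$, or $m = d$ with $\Delta(X,L) = (d-1)\, a$; substituting $\Delta(X,L) = (d-2)\, a + h^*_d$ (valid once $h^*_i = a$ for $2 \le i \le d-1$), both alternatives collapse to the single inequality $h^*_d \le a$.

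Combining, $g(X,L)$ attains Fujita's bound if and only if $h^*_i = a$ for $2 \le i \le d-1$ and $h^*_d \le a$. To match Theorem~\ref{main} it remains to verify the other hypotheses defining Castelnuovo: basepoint-freeness of $L$ is automatic for a polarized toric variety; the birationality of the morphism defined by $L$ is equivalent to $\Pc$ being spanning by \cite[Proposition 2.11]{HKN2}; and $h^0(L) \ge n+2$ is exactly $a \ge 1$, which follows from $a \ge h^*_d \ge 1$ (using $h^*_d \ne 0$ by the definition of $d$). The main technical obstacle is the clean derivation of the genus formula $g(X,L) = \sum_{i=2}^d (i-1)\, h^*_i$ from the two Ehrhart expansions, together with the tidy case analysis that matches the combinatorial and Fujita bounds; the rest is bookkeeping with the $h^*$-vector.
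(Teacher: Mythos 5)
Your proof is correct, and it reaches the theorem by a genuinely different organization of the same ingredients. The paper splits the work into two propositions: sufficiency is a direct verification that $g(X,L)$ equals the Fujita bound after computing $m$ explicitly in the two cases $h^*_s=h^*_1$ and $h^*_s<h^*_1$, and necessity proceeds by a separate case analysis on $m$ (for $m=s-1$ a telescoping identity reducing to the volume bound of Lemma \ref{lem:vol}, and for $m=s-1+k$ with $k\geq 1$ a chain of inequalities forcing $k=1$). You instead observe that with $f(k)=k\Delta(X,L)-\binom{k}{2}h^*_1$ the integer $m$ is precisely the argmax of $f$ over positive integers, while Theorem \ref{thm:general} gives the independent bound $g(X,L)\leq f(\deg(\Pc)-1)$ with equality exactly when $h^*_j=h^*_1$ for $2\leq j\leq \deg(\Pc)-1$; the chain $g\leq f(\deg(\Pc)-1)\leq f(m)$ then yields both directions of the equality analysis simultaneously, and the second equality $f(\deg(\Pc)-1)=f(m)$ unwinds to exactly $h^*_{\deg(\Pc)}\leq h^*_1$. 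The external inputs are identical (the $h^*$-dictionary for $L^n$, $\Delta(X,L)$, $g(X,L)$; Theorem \ref{thm:general}; and Lemma \ref{lem:spanning} for birationality versus spanningness), but your framing removes the paper's parameter $k$ and its inequality chain, and it makes conceptually transparent where the condition $h^*_1\geq h^*_{\deg(\Pc)}$ comes from, namely as the condition for the spanning bound and the Fujita bound to coincide. Two small points worth making explicit in a final write-up: in the forward direction $h^*_1\geq 1$ (needed so that $m$ is defined) follows from $h^*_1\geq h^*_{\deg(\Pc)}\geq 1$ as you note, and in the converse direction it follows from the Castelnuovo hypothesis $h^0(L)\geq n+2$; and the degenerate cases $\deg(\Pc)\leq 1$, which you defer to inspection, do need the separate convention for $(\PP^n,\Oc(1))$ exactly as in the paper.
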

The proof of Theorem \ref{thm:Kawaguchi} in \cite{Kawaguchi} missed the birationality of the morphism defined by the ample line bundle associated to a full-dimensional lattice polytope $\Pc \subset \RR^n$ with ${\rm int}(\Pc) \cap \ZZ^n \neq \emptyset$ and $h^*_1=h^*_j$ for any $2 \leq j \leq n-1$. 
In the present paper, we will fill this gap.  
Moreover, we give an example which shows that we need the spanningness condition in Theorem \ref{main} (Example \ref{ex:nonspanning}).
Finally, in Section 4, we give a new sufficient criterion for a lattice polytope to be IDP (Theorem \ref{thm:application}) as an application of Theorem \ref{main}.

\section{sectional genus}
In this section, we recall formulas for the sectional genus of a polarized toric variety and the upper bound in Theorem \ref{thm:castel} in terms of their $h^*$-vectors of the associated lattice polytope.
Let $(X,L)$ be an $n$-dimensional polarized toric variety and $\Pc \subset \RR^n$ the associated lattice polytope.
Set $s=\deg(\Pc)$.
Then we can read off many invariants of $L$ from $\Pc$ (see cf. \cite{Oda}). 
In particular, one has
\begin{itemize}
	\item $h^0(L)=|\Pc \cap \ZZ^n|=h^*_1+(n+1)$,
	\item $L^n=\Vol(\Pc)=\sum_{j=0}^{s}h^*_j$.
\end{itemize}
Hence we obtain $\Delta(X,L)=h^*_2+\cdots+h^*_s$.
Moreover, letting $m$ be the integer defined in Theorem \ref{thm:castel}, one has
\[
m=\left\lfloor \dfrac{h^*_1+\cdots+h^*_{s}}{h^*_1}\right\rfloor. 
\]
Hence we obtain 
\[
m \Delta(X,L) - \dfrac{m(m-1)}{2}(L^n-\Delta(X,L)-1)=m(h^*_2+\cdots+h^*_s)-\dfrac{m(m-1)}{2}h^*_1.
\]
%Hence we can completely determine $G(X,L)$ by the $h^*$-vector.
On the other hand, the sectional genus $g(X,L)$ can be also expressed in terms of the $h^*$-vector.
\begin{Lemma}[{\cite[p. 6]{Kawaguchi}}]
	Let $(X,L)$ be an $n$-dimensional polarized toric variety and $\Pc$ the associated lattice polytope.
	Then one has
	\[
	g(X,L)=\sum_{j=1}^{\deg(\Pc)}(j-1)h^*_j.
	\]
\end{Lemma}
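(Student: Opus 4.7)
The plan is to compare two generating-series expressions for the Hilbert polynomial $\chi(tL)$. Since $L$ is ample on a toric variety $X$, all higher cohomology vanishes, so $\chi(tL)=h^0(tL)=|t\Pc\cap \ZZ^n|$ coincides with the Ehrhart polynomial $L_\Pc(t)$, and Stanley's Ehrhart series identity gives
$$\sum_{t\geq 0}\chi(tL)\,z^t \;=\; \frac{h^*(\Pc,z)}{(1-z)^{n+1}}.$$
On the other hand, plugging $\chi(tL)=\sum_{j=0}^n \chi_j(X,L)\, t^{[j]}/j!$ into the left-hand side and using the standard identities $\sum_{t\geq 0} z^t = 1/(1-z)$ and $\sum_{t\geq 0}(t^{[j]}/j!)\,z^t = z/(1-z)^{j+1}$ for $j\geq 1$ (which follow by reindexing $t\mapsto t-1$, after noting that $t^{[j]}$ vanishes at $t=0$ when $j\geq 1$), I obtain
$$\sum_{t\geq 0}\chi(tL)\,z^t \;=\; \frac{\chi_0(1-z)^n+z\sum_{j=1}^n \chi_j(X,L)(1-z)^{n-j}}{(1-z)^{n+1}}.$$

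Equating the two numerators and substituting $w=1-z$ converts the identity into a polynomial identity in $w$:
$$h^*(\Pc,1-w) \;=\; \chi_n+\sum_{k=1}^{n-1}(\chi_{n-k}-\chi_{n-k+1})w^k+(\chi_0-\chi_1)w^n,$$
from which I read off $\chi_{n-1}$ via the coefficient of $w^1$. On the other side, expanding $h^*(\Pc,1-w)=\sum_m h^*_m(1-w)^m$ gives $[w^1]\,h^*(\Pc,1-w)=-\sum_{m=1}^n m\,h^*_m$, so
$$\chi_{n-1}-\chi_n \;=\; -\sum_{m=1}^n m\,h^*_m.$$

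Combining this with the already-recorded identities $\chi_n=L^n=\Vol(\Pc)=\sum_{m=0}^n h^*_m$ and $h^*_0=1$ rearranges to $\chi_{n-1}=1-\sum_{m=1}^n(m-1)h^*_m$, so $g(X,L)=1-\chi_{n-1}=\sum_{m=1}^n(m-1)h^*_m$. Since $h^*_m=0$ for $m>\deg(\Pc)$, the sum truncates to $\sum_{j=1}^{\deg(\Pc)}(j-1)h^*_j$, which is the desired formula. The argument is a purely formal generating-function manipulation; the only real pitfall is careful bookkeeping when splitting off the $j=0$ summand and when re-indexing under the substitution $w=1-z$, but no substantive obstacle arises.
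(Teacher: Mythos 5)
Your derivation is correct: the generating-function identity $\sum_{t\ge 0}(t^{[j]}/j!)z^t=z/(1-z)^{j+1}$ for $j\ge 1$, the comparison of numerators with Stanley's Ehrhart series $h^*(\Pc,z)/(1-z)^{n+1}$ (valid because Demazure vanishing gives $\chi(tL)=|t\Pc\cap\ZZ^n|$), and the extraction of the coefficient of $w^1$ after the substitution $w=1-z$ all check out, and together with $\chi_n=L^n=\sum_m h^*_m$ they yield exactly $g(X,L)=\sum_{j\ge 1}(j-1)h^*_j$. Note, however, that the paper does not prove this lemma at all --- it imports it verbatim from Kawaguchi's paper --- so there is no in-paper argument to compare against; your proof is a clean, self-contained justification of the cited formula and is essentially the standard Ehrhart-series computation one would expect behind it.
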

Therefore, we can determine whether or not the sectional genus of the associated polarized toric variety achieves the upper bound in Theorem \ref{thm:castel} from the $h^*$-vector of the polytope.

\section{Proof of Theorem \ref{main}}
In this section, we prove Theorem \ref{main}.
Let ${\bf 0}$ be the origin of $\RR^n$ and $\eb_i$ the $i$-th unit coordinate vector in $\RR^n$. The \textit{standard simplex} of dimension $n$ is the convex hull of ${\bf 0}, \eb_1,\ldots, \eb_n$. It is well known that the associated polarized toric variety of a full-dimensional lattice polytope of dimension $n$ is $(\PP^n, \Oc(1))$ if and only if the polytope is unimodularly equivalent to the standard simplex of dimension $n$. Here two lattice polytopes $\Pc, \Qc \subset \RR^n$ are said to be \textit{unimodularly equivalent} if there exists $f \in GL_n(\ZZ)$ and $\ub \in \ZZ^n$ such that $\Qc=f(\Pc)+\ub$.

Next we recall a geometric interpretation for spanning polytopes.
 \begin{Lemma}[{\cite[Proposition 2.11]{HKN2}}]
 \label{lem:spanning}
 A full-dimensional lattice polytope is spanning if and only if the morphism defined by the associated ample line bundle is birational on its image. 
 \end{Lemma}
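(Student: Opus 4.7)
The plan is to analyze the morphism $\phi_L$ associated with $L$ explicitly in terms of the lattice points of $\Pc$, identify the map it induces on the dense torus orbits of $X_\Pc$ and its image, and then rephrase birationality as an equality of lattices.

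First I would fix an enumeration $\Pc \cap \ZZ^n = \{\ab_0, \ab_1, \ldots, \ab_N\}$ and use the standard fact that on the dense torus $T = (\CC^*)^n \subset X_\Pc$, the morphism defined by $L$ sends $\tb \in T$ to the point $[\chi^{\ab_0}(\tb) : \cdots : \chi^{\ab_N}(\tb)] \in \PP^N$. Dividing through by $\chi^{\ab_0}$ identifies the image of $T$ with the subset of $\AA^N$ parametrized by the characters $\chi^{\ab_i - \ab_0}$ for $i = 1, \ldots, N$. Consequently, the image of $\phi_L$ is a projective toric variety whose dense torus $T'$ has character lattice
\[
M' := \ZZ\{\ab_i - \ab_0 : 1 \leq i \leq N\} \subset \ZZ^n.
\]

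Second, I would observe that $\phi_L|_T \colon T \to T'$ is the homomorphism of tori dual to the inclusion $M' \hookrightarrow \ZZ^n$. Since $\Pc$ is full-dimensional, $M'$ has rank $n$, and hence $\phi_L|_T$ is a finite surjection of tori of degree $d := [\ZZ^n : M']$. Because $T$ and $T'$ are dense in $X_\Pc$ and in the image, respectively, the degree of $\phi_L$ onto its image equals $d$, so $\phi_L$ is birational if and only if $M' = \ZZ^n$.

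Third, I would translate the spanning condition into the same equality. By definition, $\Pc$ is spanning precisely when every $\vb \in \ZZ^n$ can be written as $\vb = \sum_{i=0}^{N} c_i \ab_i$ with $c_i \in \ZZ$ and $\sum_{i=0}^N c_i = 1$. Eliminating $c_0 = 1 - \sum_{i \geq 1} c_i$ yields $\vb - \ab_0 \in M'$ for every $\vb \in \ZZ^n$, i.e., $M' = \ZZ^n$. Combined with the previous step this proves the lemma. The main obstacle, mild as it is, lies in the first step: one must verify carefully that the image of $\phi_L$ is indeed the projective toric variety associated with the semigroup $\ZZ_{\geq 0}\{(\ab_i,1) : 0 \leq i \leq N\}$ and that its dense torus has the claimed character lattice $M'$. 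This is standard in toric geometry (see, e.g., \cite{Oda}), but without it the identification of the degree of $\phi_L|_T$ with the index $[\ZZ^n : M']$ is not immediate.
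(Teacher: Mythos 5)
The paper gives no proof of this lemma at all: it is imported verbatim as a citation of \cite[Proposition 2.11]{HKN2}, so there is nothing internal to compare against. Your argument is the standard toric-geometry proof of that cited result, and it is correct. The two reductions are sound: spanning is, after eliminating $c_0$, exactly the statement that $M'=\ZZ\{\ab_i-\ab_0\}$ equals $\ZZ^n$ (translating $\ZZ^n$ by $\ab_0\in\ZZ^n$ changes nothing, so the condition ``$\vb-\ab_0\in M'$ for all $\vb$'' is just $M'=\ZZ^n$); and since $\Pc$ is full-dimensional, $M'$ has finite index in $\ZZ^n$, so $\phi_L|_T\colon T\to T'$ is a surjection of $n$-dimensional tori with kernel $\operatorname{Hom}(\ZZ^n/M',\CC^*)$ of order $[\ZZ^n:M']$. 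For the one step you flag as needing care, the cleanest way to avoid fussing over whether $\phi_L^{-1}(T')=T$ is to phrase the degree claim on function fields: $\phi_L$ is a dominant morphism onto its (closed, since $X_\Pc$ is complete) image, the image contains $T'=\operatorname{Spec}\CC[M']$ as a dense open subset, and the induced extension $\CC(M')\hookrightarrow\CC(\ZZ^n)$ has degree $[\ZZ^n:M']$; birationality onto the image is equivalent to this degree being $1$, i.e.\ to $M'=\ZZ^n$. With that phrasing your second step is complete, and the proof is a legitimate self-contained substitute for the external citation.
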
 

Now, we prove a sufficient criterion for a lattice polytope to be Castelnuovo.
\begin{Proposition}
\label{thm:nec}
	Let $\Pc \subset \RR^n$ be a full-dimensional lattice polytope. If $\Pc$ is spanning, $h^*_1 \geq h^*_{\deg(\Pc)}$ and $h^*_1 = h^*_j$ for any $2 \leq j \leq \deg(\Pc)-1$, then $\Pc$ is Castelnuovo. 
\end{Proposition}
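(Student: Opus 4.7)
My plan is to verify the four defining conditions of a Castelnuovo polarized variety for $(X,L)$, the polarized toric variety associated to $\Pc$: (i) either $(X,L)\cong(\PP^n,\Oc(1))$ or $h^0(L)\geq n+2$, (ii) $L$ is basepoint free, (iii) the morphism defined by $L$ is birational on its image, and (iv) $g(X,L)$ equals the Fujita bound of Theorem \ref{thm:castel}. Set $s:=\deg(\Pc)$. Condition (ii) is automatic because $\Pc$ is a lattice polytope, and (iii) follows from the spanning hypothesis via Lemma \ref{lem:spanning}. For (i), I would first dispose of the degenerate case $s=0$: then $h^*_j=0$ for every $j\geq 1$, so $\Vol(\Pc)=1$ and $\Pc$ is unimodularly equivalent to the standard simplex, giving $(X,L)\cong(\PP^n,\Oc(1))$, which is Castelnuovo by convention. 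For $s\geq 1$, the hypotheses preclude $h^*_1=0$ (otherwise $h^*_2=\cdots=h^*_{s-1}=0$ and $0\leq h^*_s\leq h^*_1=0$ would force $h^*_s=0$, contradicting $s=\deg(\Pc)$); hence $h^0(L)=h^*_1+n+1\geq n+2$.

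The substantive content is (iv), which reduces to a direct computation using the formulas recalled in Section 2. Under the hypothesis, $h^*_1+\cdots+h^*_s=(s-1)h^*_1+h^*_s$, so
\[
m \;=\; \left\lfloor \frac{(s-1)h^*_1+h^*_s}{h^*_1} \right\rfloor \;=\; (s-1)+\left\lfloor \frac{h^*_s}{h^*_1} \right\rfloor,
\]
which evaluates to $s-1$ when $h^*_s<h^*_1$ and to $s$ when $h^*_s=h^*_1$. Treating each sub-case separately, I would substitute the values of the $h^*_j$ into both the upper bound $m(h^*_2+\cdots+h^*_s)-\tfrac{m(m-1)}{2}h^*_1$ and the sectional genus $\sum_{j=1}^{s}(j-1)h^*_j$; the inner sums are telescoping arithmetic progressions, and a short calculation shows both quantities equal $\tfrac{(s-1)(s-2)}{2}h^*_1+(s-1)h^*_s$ when $m=s-1$, and $\tfrac{s(s-1)}{2}h^*_1$ when $m=s$. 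This yields (iv) and completes the proof.

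I do not anticipate a genuine obstacle: the heart of the statement is an arithmetic identity, and the only subtlety is correctly identifying the floor appearing in $m$, which the extra hypothesis $h^*_1\geq h^*_{\deg(\Pc)}$ is precisely designed to control. The bookkeeping is streamlined by handling the cases $s=0$ and $s\geq 1$ first, so that by the time one reaches the genus computation, the hypothesis already fixes the shape $h^*(\Pc)=(1,h^*_1,h^*_1,\ldots,h^*_1,h^*_s)$ with $h^*_s\leq h^*_1$ and the floor function evaluates to exactly one of two values.
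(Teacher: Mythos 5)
Your proposal is correct and follows essentially the same route as the paper: compute $m=(s-1)+\lfloor h^*_s/h^*_1\rfloor$ from the hypotheses, split into the cases $h^*_s<h^*_1$ and $h^*_s=h^*_1$, and check directly that the Fujita bound and the sectional genus both reduce to the same arithmetic expression in each case. Your extra care with the degenerate cases ($s=0$ and the verification that $h^0(L)\geq n+2$) is a minor tidying of what the paper dispatches by simply setting aside the standard simplex.
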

\begin{proof}
Assume that $\Pc$ is not unimodularly equivalent to the standard simplex of dimension $n$.
	Let $(X,L)$ be the associated polarized toric variety of $\Pc$ and $m$ the integer defined in Theorem \ref{thm:castel}.
	Since $\Pc$ is spanning, the morphism defined by $L$ is birational on its image by Lemma \ref{lem:spanning}.
	Set $s=\deg(\Pc)$.
	Then one has
	\[
	L^n=1+(s-1)h^*_1+h^*_s.
	\]
	Hence we obtain
	\[
	m= s-1 + \left\lfloor \dfrac{h^*_{s}}{h^*_1} \right\rfloor = \begin{cases}
		s & (h^*_s = h^*_1),\\
		s-1 & (h^*_s < h^*_1).
	\end{cases}
	\]
	
	Now, assume that $m=s$, hence $h^*_s=h^*_1$.
	Then one has 
	\begin{align*}
	m \Delta(X,L) - \dfrac{1}{2}m(m-1)(L^n-\Delta(X,L)-1)
	%=&s(L^d-h^*_1-1)-\dfrac{1}{2}s(s-1)h^*_1\\	
	%=&s(1+sh^*_1-h^*_1-1)-\dfrac{1}{2}s(s-1)h^*_1\\
	=&\dfrac{1}{2}s(s-1)h^*_1\\
	=&\sum_{j=1}^s(j-1)h^*_1\\
	=&\sum_{j=1}^s(j-1)h^*_j=g(X,L).
	\end{align*}
	This implies that $g(X,L)$ achieves the upper bound in Theorem \ref{thm:castel}. Therefore, $(X,L)$ is Castelnuovo. Hence $\Pc$ is Castelnuovo. 
	
	Next, we assume that $m=s-1$, hence $h^*_s < h^*_1$.
	Then similarly, one has 
		\begin{align*}m \Delta(X,L) - \dfrac{1}{2}m(m-1)(L^n-\Delta(X,L)-1)
	%=&(s-1)(L^d-h^*_1-1)-\dfrac{1}{2}(s-1)(s-2)h^*_1\\	
	%=&(s-1)(1+(s-1)h^*_1+h^*_s-h^*_1-1)-\dfrac{1}{2}(s-1)(s-2)h^*_1\\
	=&\dfrac{1}{2}(s-1)(s-2)h^*_1+(s-1)h^*_s\\
	=&\sum_{j=1}^{s-1}(j-1)h^*_1+(s-1)h^*_s\\
	=&\sum_{j=1}^s(j-1)h^*_j=g(X,L).
	\end{align*}
		This implies that $g(X,L)$ achieves the upper bound in Theorem \ref{thm:castel}. Therefore, $(X,L)$ is Castelnuovo, namely, $\Pc$ is Castelnuovo, as desired.
	\end{proof}

	From the lower bound in Theorem \ref{thm:general} we obtain the following lower bound for the normalized volume of a spanning lattice polytope $\Pc$.
	\begin{Lemma}
	\label{lem:vol}
		Let $\Pc \subset \RR^n$ be a full-dimensional spanning lattice polytope.
		Then one has
		\[
		\Vol(\Pc) \geq 1+(\deg(\Pc)-1)h^*_1+h^*_{\deg(\Pc)}
		\]
		and the equality holds if and only if for any $2 \leq j \leq \deg(\Pc)-1$, $h^*_1=h^*_j$.
	\end{Lemma}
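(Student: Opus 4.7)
The plan is to deduce the lemma as a direct corollary of Theorem \ref{thm:general} together with the volume identity $\Vol(\Pc) = \sum_{j=0}^{s} h^*_j$ recalled in Section 2, where $s := \deg(\Pc)$. The substantive case is $s \geq 2$; when $s = 1$ one has $h^*_s = h^*_1$ and the claimed inequality is simply the identity $\Vol(\Pc) = 1 + h^*_1$, with the equality characterization holding vacuously over the empty index set $\{2,\ldots,s-1\}$.

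For $s \geq 2$, I would split off the three ``known'' coefficients $h^*_0 = 1$, $h^*_1$, and $h^*_s$ to write
\[
\Vol(\Pc) \;=\; 1 + h^*_1 + \sum_{j=2}^{s-1} h^*_j + h^*_s.
\]
Since $\Pc$ is spanning, Theorem \ref{thm:general} yields $h^*_j \geq h^*_1$ for every $j$ in the range $2 \leq j \leq s-1$. Summing these $s-2$ termwise bounds gives $\sum_{j=2}^{s-1} h^*_j \geq (s-2)\,h^*_1$, and substituting back produces the desired inequality $\Vol(\Pc) \geq 1 + (s-1)\,h^*_1 + h^*_s$.

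For the equality clause, the estimate above is assembled from $s-2$ \emph{independent} termwise inequalities $h^*_j \geq h^*_1$, so the aggregate bound is sharp precisely when each individual bound is sharp; this translates exactly into the condition $h^*_1 = h^*_j$ for every $2 \leq j \leq s-1$. There is essentially no conceptual obstacle to overcome: once Theorem \ref{thm:general} is in hand, the lemma is a matter of bookkeeping, and the only care needed is the small-degree boundary described at the outset. This lemma will then feed into the proof of necessity in Theorem \ref{main} by combining the equality condition with a comparison between $h^*_s$ and $h^*_1$.
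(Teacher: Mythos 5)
Your argument is correct and is exactly the proof the paper intends: the lemma is presented as an immediate consequence of Theorem \ref{thm:general} via the identity $\Vol(\Pc)=\sum_{j=0}^{\deg(\Pc)}h^*_j$, summing the termwise bounds $h^*_j\geq h^*_1$ for $2\leq j\leq \deg(\Pc)-1$ and tracking equality termwise. The only caveat (which concerns the paper's statement rather than your reasoning) is the degenerate case $\deg(\Pc)=0$, i.e.\ $\Pc$ a unimodular simplex, where $h^*_1=0$ and the claimed inequality would read $1\geq 2$; this case is implicitly excluded wherever the lemma is applied, and your treatment of $\deg(\Pc)=1$ and $\deg(\Pc)\geq 2$ is fine.
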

	
	Now, we prove a necessary criterion for a lattice polytope to be Castelnuovo.
	
	\begin{Proposition}
	\label{prop:suf}
	Let $\Pc \subset \RR^n$ be a full-dimensional Castelnuovo 
	 polytope.
	It then follows that $\Pc$ is spanning and 
	for any $2 \leq j \leq \deg(\Pc)-1$, $h^*_1 = h^*_j$ and $h^*_1 \geq h^*_{\deg(\Pc)}$.
	\end{Proposition}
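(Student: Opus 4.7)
The plan is to translate the Castelnuovo equality into a single identity on the $h^*$-vector and then extract the claimed conditions by a careful case analysis. I first dispose of the trivial case where $\Pc$ is unimodularly equivalent to the standard simplex of dimension $n$, as in Proposition \ref{thm:nec}. For the remaining case, the definition of Castelnuovo says that the associated line bundle $L$ is basepoint free and defines a morphism that is birational onto its image; Lemma \ref{lem:spanning} then yields that $\Pc$ is spanning, and Theorem \ref{thm:general} gives $h^*_j \geq h^*_1$ for $2 \leq j \leq \deg(\Pc) - 1$.

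Write $s = \deg(\Pc)$ and $a_j = h^*_j$, and let $m$ be as in Theorem \ref{thm:castel}. Using $L^n - \Delta(X,L) - 1 = a_1$ and $g(X,L) = \sum_{j=2}^s (j-1) a_j$ from Section 2, the Castelnuovo equality rewrites, after distributing the $a_1$'s, as
\[
\sum_{j=2}^{s}(m-j+1)(a_j - a_1) = \binom{m-s+1}{2}\,a_1.
\]
Setting $k = m - (s-1)$, the definition of $m$ becomes $(k-1)a_1 \leq \sum_{j=2}^{s}(a_j - a_1) < k\,a_1$.

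Lemma \ref{lem:vol} gives $k \geq 0$. To rule out $k \geq 2$, I observe that each coefficient $m - j + 1$ on the left side of the displayed equation is at least $k$, while $a_j - a_1 \geq 0$ for $j \leq s - 1$, so the left side is bounded below by $k\sum_{j=2}^s(a_j - a_1) \geq k(k-1)a_1$. For $k \geq 2$ this strictly exceeds $\binom{k}{2}a_1$, a contradiction; thus $k \in \{0,1\}$. In both cases the right side of the displayed equation vanishes. For $k = 0$, the equation reduces to $\sum_{j=2}^{s-1}(s-j)(a_j - a_1) = 0$, forcing $a_j = a_1$ for $2 \leq j \leq s - 1$, and the strict upper bound $\sum_{j=2}^s(a_j - a_1) < 0$ then gives $a_s < a_1$. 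For $k = 1$, the equation reads $\sum_{j=2}^{s-1}(s-j+1)(a_j - a_1) + (a_s - a_1) = 0$; combining with the nonnegativity $\sum_{j=2}^s(a_j - a_1) \geq 0$ coming from the definition of $m$ forces every $a_j - a_1 = 0$ for $2 \leq j \leq s$.

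The main obstacle is obtaining the upper bound $m \leq s$: Lemma \ref{lem:vol} supplies only $m \geq s - 1$ for free, and without an a priori upper bound there is no starting point for the case analysis. The argument bridges this by feeding the Castelnuovo equation and the volume lower bound into each other to rule out $k \geq 2$; once $m \in \{s-1,s\}$ is in hand, the remaining deductions are straightforward nonnegativity arguments.
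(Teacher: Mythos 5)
Your proof is correct and follows essentially the same route as the paper: spanning via Lemma \ref{lem:spanning}, the lower bound of Theorem \ref{thm:general}, and a case analysis on $k=m-s+1$ using the floor-function definition of $m$. The only real difference is cosmetic but welcome: you package both cases into the single identity $\sum_{j=2}^{s}(m-j+1)(h^*_j-h^*_1)=\binom{k}{2}h^*_1$ and define $k$ directly from $m$ via $(k-1)h^*_1\leq\sum_{j=2}^{s}(h^*_j-h^*_1)<kh^*_1$, whereas the paper splits into $m=s-1$ and $m\geq s$ and asserts $m=s-1+\lfloor h^*_s/h^*_1\rfloor$ up front --- a step that, read literally, presupposes the middle equalities; your formulation avoids that.
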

	\begin{proof}
	Assume that $\Pc$ is not unimodularly equivalent to the standard simplex of dimension $n$.
		Let $(X,L)$ be the associated polarized toric variety of $\Pc$ and $m$ the integer defined in Theorem \ref{thm:castel}.
		Set $s=\deg(\Pc)$.
		Since $\Pc$ is Castelnuovo, the morphism defined by $L$ is birational on its image. Hence $\Pc$ is spanning by Lemma \ref{lem:spanning}.
		From Theorem \ref{thm:general} we obtain
			\[
			m \geq \left\lfloor \dfrac{(s-1)h^*_1+h^*_s}{h^*_1} \right\rfloor=(s-1)+\left\lfloor \dfrac{h^*_s}{h^*_1} \right\rfloor \geq s-1.
			\]
			
			First, assume that $m=s-1$ hence $h^*_1 > h^*_s$. 
			Since $\Pc$ is Castelnuovo, we obtain
			\[
			g(X,L)=(s-1)(\Vol(\Pc)-h^*_1-1)-\dfrac{(s-1)(s-2)}{2}h^*_1.	
			\]
			Since $\Pc$ is spanning, by Theorem \ref{thm:general}, one has
			\begin{align*}
				0 &\leq \sum_{j=1}^{s-1}(s-1-j)(h^*_j-h^*_1)\\
				&=\sum_{j=1}^s(s-1-j)h^*_j+h^*_s-\dfrac{(s-1)(s-2)}{2}h^*_1\\
				&=\sum_{j=1}^s(s-2)h^*_j+\sum_{j=1}^{s}(1-j)h^*_j+h^*_s-\dfrac{(s-1)(s-2)}{2}h^*_1\\
				&=(s-2)(\Vol(\Pc)-1)-g(X,L)+h^*_s-\dfrac{(s-1)(s-2)}{2}h^*_1\\
				&=-\Vol(\Pc)+1+(s-1)h^*_1+h^*_s \leq 0,
			\end{align*}
			where the last inequality follows from Lamma \ref{lem:vol}.
			Note that this is essentially the same argument which was used in \cite{Kawaguchi}.
			Hence we obtain $\Vol(\Pc)=1+(s-1)h^*_1+h^*_s$. From Lemma \ref{lem:vol} this implies $h^*_1 = h^*_j$ for any $2 \leq j \leq s-1$.
			
			Next, we assume that $m \geq s$, hence $h^*_1 \leq h^*_s$. Let $k \geq 1$ be the integer such that $kh^*_1 \leq h^*_s < (k+1)h^*_1$. Hence one has $m=s-1+k$.
			Since $\Pc$ is Castelnuovo and spanning, it then follows that
			\begin{align*}
				 0&=m\Delta(X,L)-\dfrac{m(m-1)}{2}(L^n-\Delta(X,L)-1)-g(X,L)\\
				&=\underbrace{(s-1+k)\sum_{j=2}^{s}h^*_j}-\dfrac{(s-1+k)(s-2+k)}{2}h^*_1\underbrace{-\sum_{j=1}^s(j-1)h^*_j}\\
				&=\left(\underbrace{\sum_{j=1}^{s}(s+k-j)h^*_j}-(s-1+k)h^*_1\right)-\dfrac{(s-1+k)(s-2+k)}{2}h^*_1\\
				&=\left(\underbrace{\sum_{j=1}^{s}(s+k-j)(h^*_j-h^*_1)}+\sum_{j=1}^{s}(s+k-j)h^*_1\right)-(s-1+k)h^*_1-\dfrac{(s-1+k)(s-2+k)}{2}h^*_1\\
				& \geq k(h^*_{s}-h^*_{1})\underbrace{+\sum_{j=1}^{s}(s+k-j)h^*_1-(s-1+k)h^*_1-\dfrac{(s-1+k)(s-2+k)}{2}h^*_1}\\
				& = k(h^*_s-h^*_1)-\dfrac{k(k-1)}{2}h^*_1\\
				&\geq k(k-1)h^*_1-\dfrac{k(k-1)}{2}h^*_1 = \dfrac{k(k-1)}{2}h^*_1\geq 0,
			\end{align*}
			where the first inequality follows from Theorem \ref{thm:general} and the second inequality follows from $kh^*_1 \leq h^*_s$.
			Therefore it holds that $k=1$, $h^*_1=h^*_s$ and $h^*_1 = h^*_j$ for any $2 \leq j \leq s-1$, as desired.
	\end{proof}
	Therefore, we can complete a proof of Theorem \ref{main} by combining  Propositions \ref{thm:nec} and \ref{prop:suf}.

Next, we complete a proof of Theorem \ref{thm:Kawaguchi}.
We say that a full-dimensional lattice polytope $\Pc \subset \RR^n$ possesses the \textit{integer decomposition property}  if for every integer $k \geq 1$, every lattice point in $k\Pc$ is a sum of $k$ lattice points from $\Pc$.
	A lattice polytope which possesses the integer decomposition property is called \textit{IDP}.
	In general, IDP polytopes are spanning.
On the other hand, a (lattice) triangulation of a full-dimensional lattice polytope is called \textit{unimodular} if every maximal face of the triangulation is unimodularly equivalent to the standard simplex. It then follows that a lattice polytope with a unimodular triangulation is IDP.
We can determine whether a triangulation of a lattice polytope is unimodular or not in terms of the $h^*$-vector of the polytope.
Let $\Delta$ be a  simplicial complex of dimension $n-1$ with $f_i$ $i$-dimensional faces and $f_{-1}=1$. Then the \textit{$h$-vector} $h(\Delta)=(h_0,\ldots,h_n)$ of $\Delta$ is defined by the relation
\[
\sum_{i=0}^{n} f_{i-1} (t-1)^{n-i} = \sum_{i=0}^{n} h_i t^{n-i}.
\]
\begin{Lemma}[{\cite[Theorem 2]{BM}}]
\label{lem:complex}
Let $\Pc$ be a lattice polytope of dimension $n$, and let $\Delta$ be a triangulation of $\Pc$ with $h(\Delta)=(h_0,\ldots,h_{n+1})$. Then $\Delta$ is unimodular if and only if $h^*(\Pc)=(h_0,\ldots,h_n)$.
\end{Lemma}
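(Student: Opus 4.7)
The plan is to treat the two directions separately, both relying on Ehrhart-theoretic bookkeeping of the triangulation $\Delta$.

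For the ``if'' direction, I would extract the normalized volume. Summing the claimed identity gives $\Vol(\Pc)=\sum_{i=0}^n h^*_i = \sum_{i=0}^n h_i$, and setting $t=1$ in the defining relation of the $h$-vector of $\Delta$ yields $\sum_{i=0}^{n+1}h_i = f_n$, the number of maximal simplices. Extracting the $t^0$ coefficient of the same relation expresses $h_{n+1}$ as $(-1)^n\widetilde\chi(\Delta)$, which vanishes because $\Delta$ triangulates the contractible polytope $\Pc$. Hence $\Vol(\Pc)=f_n$. Since $\Vol(\Pc)$ is the sum of the normalized volumes of the maximal simplices, each a positive integer, every maximal simplex must have normalized volume exactly $1$ and is therefore unimodular.

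For the ``only if'' direction, I would argue via a half-open decomposition of $\Pc$. Fix a generic point $v\in\RR^n$ exterior to $\Pc$; for each maximal simplex $\sigma$ of $\Delta$ let $j_\sigma$ denote the number of facets of $\sigma$ visible from $v$, and let $\sigma^{\circ}$ be the half-open simplex obtained from $\sigma$ by deleting the relative interiors of those facets. Genericity of $v$ ensures that $\Pc=\bigsqcup_\sigma\sigma^{\circ}$ is a disjoint union. For a unimodular $n$-simplex with $j$ facets removed, a direct count gives $\sum_{k\geq 0}|k\sigma^{\circ}\cap\ZZ^n|\,t^k = t^{j}/(1-t)^{n+1}$. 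Summing over $\sigma$ yields $h^*(\Pc,t)=\sum_\sigma t^{j_\sigma}$, and the classical interpretation of $h$-polynomials via line shellings identifies this sum with $\sum_i h_i t^i$. Since $h^*(\Pc,t)$ has degree at most $n$, the coefficient $h_{n+1}$ must vanish, and the identity $h^*(\Pc)=(h_0,\ldots,h_n)$ follows.

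The main obstacle I anticipate is in the ``only if'' direction: verifying that the generic point $v$ really induces a line shelling of $\Delta$ whose restriction statistic reproduces the visibility indices $j_\sigma$, so that $\sum_\sigma t^{j_\sigma}$ is literally the $h$-polynomial of $\Delta$. This is classical for regular triangulations, but for a general lattice triangulation it must be established with care, and it is the essential non-routine step of the argument.
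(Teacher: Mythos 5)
The paper offers no proof of this lemma---it is quoted directly from Betke--McMullen---so there is no internal argument to compare against; I will just assess your sketch. Your ``if'' direction is correct and complete: $h_{n+1}=(-1)^{n}\widetilde{\chi}(\Delta)=0$ since $\Delta$ triangulates a ball, hence $\Vol(\Pc)=\sum_{i=0}^{n}h_i=f_n$, and as the $f_n$ maximal lattice simplices have normalized volumes that are positive integers summing to $\Vol(\Pc)$, each must equal $1$.

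The ``only if'' direction has two genuine problems. First, the generic point $v$ must be taken in the \emph{interior} of $\Pc$ (say, inside one fixed maximal cell), not exterior to it: with $v$ outside $\Pc$, any boundary facet of $\Pc$ whose affine hull separates $v$ from the unique maximal simplex containing it gets deleted from that simplex and is covered by nothing else, so $\bigsqcup_{\sigma}\sigma^{\circ}$ is a proper subset of $\Pc$ and the Ehrhart bookkeeping breaks. Second---and as you yourself flag---the identification $\sum_{\sigma}t^{j_\sigma}=\sum_i h_i t^i$ is the crux, and the route you propose, a line shelling induced by $v$, cannot work in the stated generality: an arbitrary lattice triangulation of $\Pc$ need not be regular or even shellable. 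The fix is that no shelling is needed. With $v$ interior and generic, each half-open simplex $\sigma^{\circ}$ is the disjoint union of the relative interiors of exactly those faces of $\sigma$ containing the $j_\sigma$ vertices opposite the deleted facets; since the relative interiors of the faces of $\Delta$ partition $\Pc$ and the $\sigma^{\circ}$ also partition $\Pc$, each face of $\Delta$ contributes its relative interior to exactly one $\sigma^{\circ}$, whence
\[
f_{k-1}=\sum_{\sigma}\binom{n+1-j_\sigma}{k-j_\sigma}.
\]
Substituting this into the defining relation of $h(\Delta)$ gives $\sum_i h_i t^{n+1-i}=\sum_{\sigma}t^{\,n+1-j_\sigma}$, i.e.\ $h_i=\#\{\sigma : j_\sigma=i\}$, with no appeal to shellability; together with your computation $\sum_{k\geq 0}|k\sigma^{\circ}\cap\ZZ^n|t^k=t^{j_\sigma}/(1-t)^{n+1}$ for unimodular $\sigma$, this closes the argument.
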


Now, we show the following. 
\begin{Lemma}
\label{lem:unimodular}
Let	$\Pc \subset \RR^n$ be a full-dimensional lattice polytope  with ${\rm int}(\Pc) \cap \ZZ^n \neq \emptyset$ and $h^*_1=h^*_j$ for any $2 \leq j \leq n-1$. Then $\Pc$ has a unimodular triangulation. In particular, $\Pc$ is IDP and spanning.
\end{Lemma}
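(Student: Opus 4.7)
The plan is to exhibit a unimodular triangulation $\Delta$ of $\Pc$ whose $h$-vector is $(h^*_0,h^*_1,\ldots,h^*_n,0)$; Lemma \ref{lem:complex} will then certify unimodularity, and the ``in particular'' statements are standard implications (unimodular triangulation $\Rightarrow$ IDP $\Rightarrow$ spanning, the latter because IDP forces the lattice points of $\Pc$ to span $\ZZ^n$ affinely).

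Under the hypothesis, the $h^*$-polynomial of $\Pc$ takes the very rigid shape
\[
h^*(\Pc,t) = 1 + h^*_1\bigl(t + t^2 + \cdots + t^{n-1}\bigr) + h^*_n t^n,
\]
with $h^*_n \geq 1$ and, by Hibi's theorem, $h^*_n \leq h^*_1$. This is precisely the extremal case of Hibi's lower bound theorem (Theorem \ref{thm:hibi}). My first step would be to exploit this extremality: equality cases in Hibi's bound are known to force strong geometric structure on $\Pc$, typically a Cayley-type decomposition into lower-dimensional lattice polytopes that themselves contain interior lattice points. I would argue (inductively on $n$) that each such piece admits a unimodular triangulation, and then assemble these into a triangulation of $\Pc$, using the fact that Cayley joins and pyramid constructions preserve unimodularity at the level of triangulations.

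Having produced the candidate triangulation $\Delta$, I would verify unimodularity by computing its $h$-vector: the product/join structure makes the Ehrhart series factor in a controlled way, and matching against the prescribed $h^*$-polynomial should give $h_i(\Delta)=h^*_i(\Pc)$ for $0 \leq i \leq n$ and $h_{n+1}(\Delta)=0$. An invocation of Lemma \ref{lem:complex} then finishes the proof.

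The main obstacle will be extracting the Cayley/decomposition structure directly from the $h^*$-vector shape, since the literature on equality cases of Hibi's inequality is scattered and the cleanest available statements may not apply verbatim. A fallback would be to take a regular pulling triangulation $\Delta$ of $\Pc$ based at an interior lattice point $v \in \text{int}(\Pc) \cap \ZZ^n$ and to compute $h(\Delta)$ by a Betke--McMullen-style bookkeeping, where the rigid form of $h^*(\Pc,t)$ is used to force all prospective contributions to $h_{n+1}(\Delta)$ to cancel. Either route reduces the lemma to an explicit $h$-vector matching via Lemma \ref{lem:complex}.
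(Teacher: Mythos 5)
Your fallback route is essentially the paper's proof, but your sketch misplaces the difficulty and omits the one inequality that makes the argument close. First, $h_{n+1}(\Delta)=0$ is not something you need to ``force to cancel'': any lattice triangulation of $\Pc$ is a simplicial $n$-ball, so $h_{n+1}(\Delta)=0$ automatically, and likewise $h_0=1$, $h_1(\Delta)=|\Pc\cap\ZZ^n|-(n+1)=h^*_1$ once the triangulation uses all lattice points of $\Pc$ as vertices. The genuine content is showing $h_j(\Delta)=h^*_j$ for $2\le j\le n-1$, and this requires a \emph{two-sided} sandwich: the Betke--McMullen decomposition gives $h_j(\Delta)\le h^*_j$ for a regular (e.g.\ pulling) triangulation, but $h^*_j=h^*_1$ alone does not let you conclude equality from an upper bound. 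You also need the lower bound $h_j(\Delta)\ge h_1(\Delta)=h^*_1$, and that inequality does not hold for an arbitrary triangulation --- it is exactly what Hibi's construction in his proof of the Lower Bound Theorem delivers, for the specific triangulation obtained by triangulating $\partial\Pc$ on its lattice points and then starring in the interior lattice points one at a time. The paper's proof consists precisely of quoting both bounds $h^*_1\le h_j(\Delta)\le h^*_j$ for that triangulation and letting the hypothesis $h^*_1=h^*_j$ squeeze them together; your sketch supplies only the upper half.

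Your primary route (reading a Cayley-type decomposition off the equality case of Hibi's bound and inducting) is not backed by a citable statement in the form you would need, and you do not carry it out, so as written it cannot stand on its own. I would drop it and promote the fallback, adding the missing ingredient: specify that $\Delta$ is the triangulation from the proof of Hibi's Lower Bound Theorem and invoke the inequalities $h^*_1\le h_j(\Delta)\le h^*_j$ for $2\le j\le n-1$ established there, after which Lemma~\ref{lem:complex} finishes the proof exactly as you indicate.
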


\begin{proof}
	Set ${\rm int}(\Pc)\cap \ZZ^n=\{\vb_1,\ldots,\vb_{l}\}$. 
	We take any triangulation $\Delta(0)$ of the boundary $\partial \Pc$ with the vertex set $\partial \Pc \cap \ZZ^n$.
	Let $\Delta(j)$ be the triangulation of $\Pc$ with the vertex set $(\partial\Pc \cap \ZZ^n) \cup \{\vb_1,\ldots,\vb_j\}$ for each $1 \leq j \leq l$ defined in the proof of \cite[Theorem 1.1]{Hibi}.
	We let $(h_0,\ldots,h_n,h_{n+1})$ be the $h$-vector of $\Delta(l)$.
	Then one has $h_0=h^*_0=1$, $h_1=h^*_1$, $h_n=h^*_n$ and $h_{n+1}=0$.
	Moreover, it follows from the proof of \cite[Theorem 1.1]{Hibi} that for any $2 \leq j \leq n-1$, we obtain
	$h^*_1 \leq h_j \leq h^*_j$.
	Since for any $2 \leq j \leq n-1$, $h^*_1=h^*_j$, one has  $h^*_j=h_j$. Hence, we have $h^*(\Pc)=(h_0,\ldots,h_{n})$.
	It then from Lemma \ref{lem:complex} that $\Delta(l)$ is a unimodular triangulation of $\Pc$, as desired.
\end{proof}
Therefore, by combining Theorem \ref{main} and Lemma \ref{lem:unimodular}, we can complete a proof of Theorem \ref{thm:Kawaguchi}.

\begin{proof}[Proof of Theorem \ref{thm:Kawaguchi}]
Since $\Pc$ has an interior lattice point, we obtain ${\rm deg}(\Pc)=n$.
	From Lemma \ref{lem:unimodular} if for any $2 \leq j \leq n-1$, $h^*_1 = h^*_j$, then $\Pc$ is spanning. On the other hand, it always satisfies $h^*_1 \geq h^*_n$.
	Therefore, from Theorem \ref{main} we know that $\Pc$ is Castelnuovo if and only if for any $2 \leq j \leq n-1$, $h^*_1 = h^*_j$. 
\end{proof}

Finally, we give an example which says that we need the spanningness condition in Theorem \ref{main}.
Namely, we can not determine whether or not a lattice polytope is Castelnuovo by using only the $h^*$-vector.

\begin{Example}
\label{ex:nonspanning}
	Let $\Pc \subset \RR^4$ be the lattice polytope which is the convex hull of
	\[
	{\bf 0}, \eb_1,\eb_2,\eb_3, \eb_1+\eb_2+2\eb_4, \eb_1-\eb_3 \subset \RR^4.
	\]
	Then one has $h^*(\Pc)=(1,1,1,1,0)$.
	On the other hand, it is clear that $\Pc$ is not spanning.  
	In particular, $\Pc$ is not Castelnuovo.
\end{Example}

	%\section{Castelnuovo polytopes with small degree}
	%In this section, we give a characterization of Castelnuovo polytopes $\Pc$ with $\deg(\Pc)$.
	%From Theorem \ref{thm:nec} every lattice polytope $\Pc$ with $\deg(\Pc) \leq 1$ is Castelnuovo.
	%A characterization of Castelnuovo polytopes $\Pc$ with $\deg(\Pc)=2$ is the following. 
	%\begin{Theorem}
	%	Let $\Pc \subset \RR^n$ be a lattice polytope of dimension $n$ with $\deg(\Pc) = 2$.
	%	Then $\Pc$ is Castelnuovo if and only if $h^*_1 \geq h^*_2$. 
	%\end{Theorem}
	%\begin{proof}
	%	We assume that $h^*_1 \geq h^*$
	%\end{proof}
	\section{An application of the main theorem}
	In this section, we give an application of Theorem \ref{main}.
	Let $\Pc$ be a full-dimensional lattice polytope and $(X,L)$ the associated polarized toric variety of $\Pc$. 
	We say that $\Pc$ is \textit{very ample} if for any sufficiently large $k \in \ZZ$ every lattice point in $k\Pc$ is a sum of $k$ lattice points in $\Pc$. 
It then follows that IDP lattice polytopes are very ample, and very ample lattice polytopes are spanning.
	Note that $\Pc$ is very ample if and only if the associated ample line bundle is very ample (cf. \cite[Section 6]{toricvarieties}).
In particular, if $\Pc$ is very ample, then $L$ is normally generated if and only if $\Pc$ is IDP (cf. \cite[Theorem 5.4.8]{toricvarieties}).
	In \cite[p. 141]{Fujita}, it is shown that for Castelnuovo varieties $(X,L)$, $L$ is very ample and normally generated. Therefore, Castelnuovo polytopes are IDP.
	Thus from Theorem \ref{main} we obtain the following.
	\begin{Theorem}
	\label{thm:application}
		Let $\Pc$ be a full-dimensional spanning lattice polytope.
		If for any $2 \leq j \leq \deg(\Pc)-1$, $h^*_1 = h^*_j$ and $h^*_1 \geq h^*_{\deg(\Pc)}$, then $\Pc$ is IDP.
	\end{Theorem}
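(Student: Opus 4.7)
The plan is to reduce to Theorem \ref{main} and then invoke the classical fact about Castelnuovo varieties quoted earlier in the section. More precisely, the hypotheses of Theorem \ref{thm:application} are exactly the hypotheses on the right-hand side of the equivalence in Theorem \ref{main}: $\Pc$ is spanning, $h^*_1 = h^*_j$ for all $2 \leq j \leq \deg(\Pc)-1$, and $h^*_1 \geq h^*_{\deg(\Pc)}$. So my first step is simply to cite Theorem \ref{main} to conclude that $\Pc$ is Castelnuovo.

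Once $\Pc$ is known to be Castelnuovo, I would appeal to the discussion in the paragraph preceding the theorem. Namely, letting $(X,L)$ be the polarized toric variety associated to $\Pc$, the hypothesis that $\Pc$ is Castelnuovo means $(X,L)$ is a Castelnuovo variety, and by Fujita's result recalled here (\cite[p.~141]{Fujita}) the line bundle $L$ is very ample and normally generated. In particular, $L$ is very ample, so $\Pc$ is a very ample lattice polytope (via the standard toric dictionary, cf. \cite[Section 6]{toricvarieties}).

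For the final step, I would use the characterization recorded just before the statement: if $\Pc$ is very ample, then $L$ is normally generated if and only if $\Pc$ is IDP (cf. \cite[Theorem 5.4.8]{toricvarieties}). Since $L$ is both very ample and normally generated, we conclude that $\Pc$ is IDP, as desired.

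There is essentially no obstacle here; Theorem \ref{thm:application} is a direct corollary of Theorem \ref{main} combined with two standard facts (Fujita's theorem on Castelnuovo varieties and the dictionary between normally generated ample line bundles and the integer decomposition property on the polytope side). The only thing to keep in mind is the degenerate case where $\Pc$ is unimodularly equivalent to the standard simplex of dimension $n$, in which case $\Pc$ is trivially IDP, so this edge case requires no separate argument.
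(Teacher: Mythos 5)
Your argument is exactly the paper's: the hypotheses are the right-hand side of Theorem \ref{main}, so $\Pc$ is Castelnuovo, and then Fujita's result that $L$ is very ample and normally generated for Castelnuovo varieties, combined with the standard dictionary between normal generation and the integer decomposition property, gives that $\Pc$ is IDP. This matches the paper's (very brief) derivation, so there is nothing to add.
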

	
If ${\rm deg}(\Pc)=2$, then we do not need the spanningness assumption in Theorem \ref{thm:application}.
In fact, one has the following.
\begin{Corollary}[{\cite[Corollary 1.2]{KY}}]
Let $\Pc \subset \RR^n$ be a full-dimensional lattice polytope with $\deg(\Pc)=2$.
If $h^*_1 \geq h^*_2$, then $\Pc$ is IDP.
\end{Corollary}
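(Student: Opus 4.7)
The plan is to deduce this corollary from Theorem \ref{thm:application}. Since $\deg(\Pc)=2$, the range $2\le j\le \deg(\Pc)-1$ is empty, so the equalities $h^*_1=h^*_j$ are vacuous, while $h^*_1\ge h^*_{\deg(\Pc)}=h^*_2$ is precisely the given hypothesis. Hence it suffices to verify that $\Pc$ is spanning; once that is established, Theorem \ref{thm:application} immediately yields that $\Pc$ is IDP, and the only work is to prove spanning.

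I would prove spanning by contradiction. After translating (which preserves the $h^*$-vector and the IDP property), assume $\mathbf 0\in \Pc\cap\ZZ^n$, and let $\Lambda\subseteq\ZZ^n$ be the sublattice generated by $\Pc\cap\ZZ^n$, so that $\Pc$ being spanning is equivalent to $\Lambda=\ZZ^n$. Since $\Pc$ is full-dimensional, its lattice vertices linearly span $\RR^n$ over $\QQ$, so $\Lambda$ has rank $n$ and $k:=[\ZZ^n:\Lambda]$ is a finite positive integer. If $\Pc$ is not spanning then $k\ge 2$. Now view $\Pc$ as a full-dimensional (and, by construction, spanning) lattice polytope in $\Lambda$, with $h^*$-vector $(\tilde h^*_0,\tilde h^*_1,\tilde h^*_2,\ldots)$ computed relative to $\Lambda$.

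The contradiction comes from two formulas. First, since $\Lambda\subseteq\ZZ^n$ and $\Pc\cap\ZZ^n\subseteq\Lambda$ by construction, one has $\Pc\cap\Lambda=\Pc\cap\ZZ^n$, hence
\[
\tilde h^*_1=|\Pc\cap\Lambda|-(n+1)=|\Pc\cap\ZZ^n|-(n+1)=h^*_1.
\]
Second, because $\Lambda$ has covolume $k$ in $\RR^n$, comparing leading coefficients of the two Ehrhart polynomials gives $\Vol_\Lambda(\Pc)=\Vol(\Pc)/k=(1+h^*_1+h^*_2)/k$. Combining these with the nonnegativity of each $\tilde h^*_i$ yields
\[
1+h^*_1 \;=\; \tilde h^*_0+\tilde h^*_1 \;\le\; \sum_i \tilde h^*_i \;=\; \frac{1+h^*_1+h^*_2}{k},
\]
so $h^*_2 \ge (k-1)(1+h^*_1) \ge 1+h^*_1 > h^*_1$, contradicting $h^*_1\ge h^*_2$.

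The main delicate point — where I would double-check most carefully — is the change-of-lattice bookkeeping: verifying that $\tilde h^*_1=h^*_1$ (so the coefficient of $t$ is preserved under the sublattice switch) and that $\Vol_\Lambda(\Pc)=\Vol(\Pc)/k$ (so the normalized volume rescales exactly by the index). Both facts are standard consequences of Ehrhart theory, but they are what make the use of $\tilde h^*_i\ge 0$ sharp. With them in hand, the contradiction above is essentially automatic, $\Pc$ must be spanning, and Theorem \ref{thm:application} completes the proof.
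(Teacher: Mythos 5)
Your reduction to Theorem \ref{thm:application} is exactly the paper's: since $\deg(\Pc)=2$ the interval $2\le j\le \deg(\Pc)-1$ is empty and $h^*_1\ge h^*_2$ is the remaining hypothesis, so everything rests on spanningness. Where you diverge is that the paper simply cites \cite[Corollary~3.5]{KY} for the spanning statement, whereas you prove it from scratch, and your argument is correct: after translating so that $\mathbf 0\in\Pc\cap\ZZ^n$, the subgroup $\Lambda$ generated by $\Pc\cap\ZZ^n$ coincides with the affine lattice generated (because $\mathbf 0$ lies in the generating set), it has finite index $k$ by full-dimensionality, $\Pc\cap\Lambda=\Pc\cap\ZZ^n$ gives $\tilde h^*_1=h^*_1$, the normalized volume rescales by $1/k$, and Stanley nonnegativity of the $\tilde h^*_i$ yields $h^*_2\ge (k-1)(1+h^*_1)$, which forces $k=1$ under $h^*_1\ge h^*_2$. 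This is a genuinely self-contained and elementary replacement for the cited result (the Katth\"an--Yanagawa proof goes through graded Cohen--Macaulay domains), and it is sharp: the Reeve simplices, with $h^*=(1,0,m-1,0)$ and index $m$, realize equality in your bound, showing both that the hypothesis $h^*_1\ge h^*_2$ is indispensable and that your inequality cannot be improved. The only thing your write-up leaves implicit is that Stanley's nonnegativity and the identity $\sum_i \tilde h^*_i=\Vol_\Lambda(\Pc)$ hold for the $h^*$-vector taken with respect to an arbitrary lattice $\Lambda$ (immediate after a change of coordinates identifying $\Lambda$ with $\ZZ^n$); with that noted, the proof is complete.
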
	
\begin{proof}
	In \cite[Corollary 3.5]{KY}, it is shown that $\Pc$ is spanning. 
	Hence $\Pc$ is IDP from Theorem \ref{thm:application}, as desired.
\end{proof}

\begin{Remark}
	If ${\rm deg}(\Pc) \geq 3$, then we need the spanningness assumption in Theorem \ref{thm:application} (see Example \ref{ex:nonspanning}). 
\end{Remark}	
	
A full-dimensional lattice polytope $\Pc \subset \RR^n$ is called \textit{smooth} if it is simple and if its primitive edge directions at every vertex form a basis of $\ZZ^n$. Smooth lattice polytopes correspond to smooth polarized toric varieties. It is well-known that the ample line bundle associated to a smooth lattice polytope is very ample.
In \cite{Odaconj}, Oda conjectured the following.
\begin{Conjecture}[Oda's Conjecture]
\label{conj:oda}
Every smooth lattice polytope is IDP.
\end{Conjecture}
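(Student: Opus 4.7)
The statement in question is Oda's Conjecture, asserting that every smooth lattice polytope is IDP; this is a well-known open problem, so any honest proof proposal has to be speculative and should make explicit where it breaks down. My plan is to see how far one can push the paper's main tool, Theorem \ref{thm:application}, toward the conjecture. Every smooth lattice polytope is very ample and hence spanning, so the spanningness hypothesis of Theorem \ref{thm:application} is automatic. The remaining task, if one tried to apply Theorem \ref{thm:application} directly, would be to verify the $h^*$-vector conditions $h^*_1 = h^*_j$ for $2 \leq j \leq \deg(\Pc)-1$ and $h^*_1 \geq h^*_{\deg(\Pc)}$. It is immediate, however, that these conditions cannot hold for every smooth $\Pc$: they characterize Castelnuovo polytopes, and smooth polarized toric varieties are only sporadically Castelnuovo. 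So a direct application of Theorem \ref{thm:application} is out of reach, and the proposal must involve a reduction.

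The natural reduction I would attempt is in two steps. First, given a smooth lattice polytope $\Pc$, try to produce a polyhedral subdivision $\Pc = \bigcup_i \Pc_i$ in which each $\Pc_i$ is a smooth Castelnuovo subpolytope; by Theorem \ref{thm:application}, each $\Pc_i$ is then IDP. Second, promote IDP of the cells to IDP of the whole polytope, for instance by showing that the subdivision refines to a regular triangulation whose cells are still IDP and that the combinatorics of the subdivision is lattice-compatible. For low-degree cases ($\deg(\Pc) \leq 2$) the conclusion is already known (the corollary of \cite{KY} above), which might be used as the base of an induction on $\deg(\Pc)$, with the inductive step supplied by a Cayley-type decomposition or by cutting $\Pc$ with hyperplanes coming from sections of $L+K_X$.

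The main obstacle is Step 1: no mechanism is known to carve a smooth polytope into Castelnuovo pieces in a controlled way, and extracting one would essentially require understanding how arbitrary smooth polarized toric varieties are built out of Fujita's classified Castelnuovo pieces. Step 2 is also nontrivial, because IDP is not preserved under arbitrary unions of IDP polytopes without strong compatibility of the lattice structure along shared faces. In short, the $h^*$-vector criterion developed in this paper identifies a substantial new family of IDP polytopes (Theorem \ref{thm:application}) and in particular settles the Castelnuovo case of Oda's conjecture, but a full resolution of the conjecture would need genuinely new input — most likely a structural decomposition theorem for smooth lattice polytopes — and this remains beyond the reach of the present techniques.
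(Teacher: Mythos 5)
The statement you were asked about is a \emph{conjecture}, and the paper gives no proof of it: Oda's Conjecture is recorded as an open problem, and the only thing the paper establishes is the special case of smooth \emph{Castelnuovo} polytopes (which follows not even from Theorem \ref{thm:application} but from Fujita's result that the polarizing bundle of a Castelnuovo variety is very ample and normally generated, hence the polytope is IDP). Your proposal correctly recognizes all of this and is honest that it does not constitute a proof, so there is no disagreement with the paper to report; the conjecture remains open, and your assessment of why the $h^*$-vector criterion cannot reach it directly (the conditions $h^*_1=h^*_j$ for $2\leq j\leq \deg(\Pc)-1$ and $h^*_1\geq h^*_{\deg(\Pc)}$ single out exactly the Castelnuovo polytopes among spanning ones, by Theorem \ref{main}) is accurate.

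One correction to your analysis of the speculative reduction: the obstacle is entirely in your Step 1, not in Step 2. If a full-dimensional lattice polytope $\Pc$ is covered by lattice subpolytopes $\Pc_i$ each of which is IDP, then $\Pc$ is IDP automatically: given $x\in k\Pc\cap\ZZ^n$, the point $x/k$ lies in some $\Pc_i$, so $x\in k\Pc_i\cap\ZZ^n$ decomposes as a sum of $k$ lattice points of $\Pc_i\subseteq\Pc$. No regularity, no compatibility along shared faces, and not even the subdivision structure is needed --- this is the same argument by which a unimodular triangulation implies IDP. So the entire difficulty is concentrated in producing a covering of an arbitrary smooth polytope by Castelnuovo (or otherwise certifiably IDP) lattice subpolytopes, and no such mechanism is known; this is where any attempt along these lines, including yours, currently stops.
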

Since Castelnuovo polytopes are IDP, Conjecture \ref{conj:oda} holds for smooth Castelnuovo polytopes. In particular, we obtain the following.
\begin{Corollary}
	Let $\Pc$ be a full-dimensional smooth lattice polytope.
		If for any $2 \leq j \leq \deg(\Pc)-1$, $h^*_1 = h^*_j$ and $h^*_1 \geq h^*_{\deg(\Pc)}$, then $\Pc$ is IDP.
\end{Corollary}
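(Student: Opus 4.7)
The plan is to deduce the corollary as an immediate consequence of Theorem \ref{thm:application}, whose only nontrivial hypothesis to verify is the spanningness of $\Pc$. First I would invoke the paper's own observation that the ample line bundle associated to a smooth lattice polytope is very ample, so $\Pc$ itself is very ample. Next I would use the implication, also recorded in the paper, that every very ample lattice polytope is spanning.

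With $\Pc$ now known to be spanning, the hypotheses $h^*_1 = h^*_j$ for $2 \leq j \leq \deg(\Pc)-1$ and $h^*_1 \geq h^*_{\deg(\Pc)}$ are precisely those of Theorem \ref{thm:application}, so I would apply that theorem directly to conclude that $\Pc$ is IDP. There is essentially no obstacle here: the corollary is a pure specialization of Theorem \ref{thm:application} to the smooth setting, where the spanning hypothesis is automatic. The only thing to be careful about is not to invoke Oda's conjecture itself (which is still open in general); the argument goes through purely via smooth $\Rightarrow$ very ample $\Rightarrow$ spanning, combined with the main theorem, independently of whether the general Oda's conjecture holds.
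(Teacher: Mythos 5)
Your argument is correct and is exactly the reasoning the paper leaves implicit: smoothness gives very ampleness of the associated line bundle, very ample polytopes are spanning, and then Theorem \ref{thm:application} applies verbatim. Nothing further is needed.
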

	
Finally, we give an example of a spanning lattice polytope which is not IDP and whose $h^*$-vector achieves the lower bound in Theorem \ref{thm:general}.
This implies that the condition $h^*_1 \geq h^*_{{\rm deg}(\Pc)}$ in Theorem \ref{thm:application} is necessary.

	\begin{Example}
		Let $\Pc \subset \RR^{2a+1}$ be the lattice polytope which is the convex hull of
		\[
		{\bf 0}, \eb_1, \ldots, \eb_{2a}, \sum_{i=1}^{a}\eb_i+\sum_{j=a+1}^{2a}2\eb_i+3\eb_{2a+1},-\sum_{i=a+2}^{2a+1}\eb_i.
		\]
		Then $\Pc$ is spanning and one has		\[
		(h^*_0,\ldots,h^*_{2a+1})=(1,\underbrace{1,\ldots,1}_a,2,\underbrace{0,\ldots,0}_a).
		\]
		Hence we obtain $\deg(\Pc)=a+1$ and for any $2 \leq j \leq \deg(\Pc)-1$, $h^*_1=h^*_j$ and $h^*_1 < h^*_{\deg(\Pc)}$.
		In particular, \[
		\Pc \cap \ZZ^{2a+1}=\left\{{\bf 0}, \eb_1, \ldots, \eb_{2a}, \sum_{i=1}^{a}\eb_i+\sum_{j=a+1}^{2a}2\eb_i+3\eb_{2a+1},-\sum_{i=a+2}^{2a+1}\eb_i\right\}.\]
		On the other hand, one has \[
		\eb_1+\cdots+\eb_{2a+1} \in (a+1)\Pc \cap \ZZ^{2a+1}\] However, this lattice point can not be a sum of $a+1$ lattice points in $\Pc^{2a+1}$. Hence $\Pc$ is not IDP.
	\end{Example}

	\section*{Acknowledgements}
	I am also grateful to Professor Kohji Yanagawa for helpful comments on spanning polytopes. His comments improved Theorems \ref{main} and \ref{thm:application}.
	I am also grateful to Makoto Enokizono for fruitful discussions on Castelnuovo varieties. 
	The author would like to thank anonymous referees for reading the manuscript carefully.
	The author was partially supported by JSPS KAKENHI 19J00312 and 19K14505.

\end{document}